\newtheorem{thm}{Theorem}[section]
\newtheorem{cor}[thm]{Corollary}
\newtheorem{lem}[thm]{Lemma}
\newtheorem{ex}[thm]{Example}
\newtheorem{remark}[thm]{Remark}
\numberwithin{equation}{section}
\newcommand{\fa}          {\quad \text{for all } \,}
\newcommand{\nwc}{\newcommand}
\nwc{\blue}[1]{\textcolor{blue}{#1}}
\DeclareMathOperator{\interior}{int}
\DeclareMathOperator{\supp}{supp}
\DeclareMathOperator{\diag}{diag}
\newcommand{\real}{\mbox{${\mathbb R}$}}
\def\R{\mathbb{R}}
\newcommand{\rmd}{\mathrm{d}}
\renewcommand{\epsilon}{\varepsilon}
\newcommand{\be}{\begin{equation}}
\newcommand{\ee}{\end{equation}}
\newcommand{\bea}{\begin{eqnarray}}
\newcommand{\eea}{\end{eqnarray}}
\newcommand{\beann}{\begin{eqnarray*}}
\newcommand{\eeann}{\end{eqnarray*}}
\newcommand{\benn}{\begin{equation*}}
\newcommand{\eenn}{\end{equation*}}
\newacro{FRL}[FTRL]{``Follow the Regularized Leader''}
\newacro{MW}{multiplicative weights}
\newacro{MWU}{multiplicative weights update}
\begin{document}

\title{A stochastic variant of replicator dynamics in zero-sum games and its invariant measures}

\author{Maximilian Engel}
\affiliation{Department of Mathematics, Freie Universtit{\"a}t Berlin, 
Arnimallee~6, 14195 Berlin, Germany}

\author{Georgios Piliouras}
\affiliation{DeepMind, 14-18 Handyside Street, London N1C 4DN, UK}


\begin{abstract}
We study the behavior of a stochastic variant of replicator dynamics in two-agent zero-sum games. 
Inspired by the well studied deterministic replicator equation, this model gives arguably a more realistic description of real world settings and, as we 
demonstrate here, exhibits totally distinct behavior when compared to its deterministic counterpart which is known to be recurrent.
In more detail, we characterize the statistics of such systems by their invariant measures which can be shown to be entirely supported on the boundary of the space of mixed strategies. Depending on the noise strength we can furthermore characterize these invariant measures by finding accumulation of mass at specific parts of the boundary. In particular, regardless of the magnitude of noise, we show that any invariant probability measure is a convex combination of Dirac measures on pure strategy profiles, which correspond to vertices/corners of the agents' simplices.  Thus, in the presence of stochastic perturbations, even in the most classic zero-sum settings, such as Matching Pennies, we observe a stark disagreement between the axiomatic prediction of Nash equilibrium and the evolutionary emergent behavior derived by an assumption of stochastically adaptive, learning agents. 
\end{abstract}

\maketitle

\section{Introduction}




Zero-sum games are arguably amongst the most well studied settings in game theory and economics. Dating back to seminal work by von Neumann~\cite{Neumann1928} on his famous Minimax Theorem, the study of zero-sum games effectively marked the birth of game theory itself. More importantly, it is the archetypal success story of the field. This is neatly captured by a well known quote of Nobel-prize winning economist Robert Aumann~\cite{eatwell1987new}:
``Strictly competitive games constitute one of the few areas in game theory, and indeed in the social sciences, where a fairly sharp, unique prediction is made".   
Indeed, almost all zero-sum games have a unique equilibrium\footnote{Formally, within the set of all zero-sum games, the complement of the set of zero-sum games with unique equilibrium is closed and has Lebesgue measure zero~\cite{van1991stability}.} and, 
even when multiple equilibria exist, the expected utility of each agent (their value) is uniquely defined. 
These admirable properties are nicely complemented by the computational tractability of solving for Nash equilibria in zero-sum games due to their fundamental connection to linear programming  
\cite{dantzig1951proof}. 
  However, from a computational perspective a more pressing question is whether natural decentralized learning dynamics converge to equilibria 
 robustly, i.e., even in the presence of stochastic perturbations that reflect misspecifications to the underlying game. This is particularly true for real world applications where the assumption of perfect competition is an idealized abstraction and the actual day-to-day payoffs are subject to stochastic perturbation due to background/environmental uncertainty (e.g. duopoly competition where each firm tries to maximize the number of its customers but where the total number of customers could fluctuate). Our goal in this paper is to make inroads in our understanding of a specific well motivated class of stochastic  learning dynamics.
 
Naturally, the question of whether the behavior of learning dynamics corresponds well to Nash equilibria in zero-sum games is similarly a classic object of study dating back to seminal work by Brown and Robinson in the 1950s~\cite{Brown1951,Robinson1951}.
 Historically, economists, game theorists and optimization theorists alike, when studying the behavior of learning dynamics in games, have focused on the time-average behavior of the dynamics, proving that typically both the time-average strategies as well as the time-average utilities of both agents converge to their respective equilibrium values~\cite{freund1999adaptive,Nisan:2007:AGT:1296179}.  
 A critical tool for showing such positive time-average convergence results is to establish that such dynamics are (approximately) regret minimizing as any such pair of algorithms is guaranteed to converge to (approximate) equilibria in a time-average sense. This widely applicable technique has led to a proliferation of such positive convergence results for large classes of dynamics~\cite{Cesa06,Shalev2012}. In contrast, the day-to-day behavior of learning dynamics in zero-sum games, even of those dynamics that minimize regret, 
 had not received similar levels of attention up until recently when such questions where invigorated due to their connection to Machine Learning applications such as Generative Adversarial Networks (GANs)~\cite{goodfellow2014generative}. 

A natural starting point for kick-starting the investigation into the day-to-day behavior of regret-minimizing 
learning dynamics in zero-sum games, has been the model of replicator dynamics, arguably the most well studied dynamics in evolutionary game theory~\cite{Weibull}. The replicator structure is  the continuous-time-analogue of Multiplicative Weights Update (MWU), a ubiquitous meta-algorithm 
 with strong regret guarantees~\cite{Arora05themultiplicative}.  Despite these strong regret properties of replicator dynamics~\cite{sorin2009exponential},
 its day-to-day behavior in zero-sum games does not converge to Nash equilibria but instead ``cycles"  around at a constant Kullback-Leibler divergence from the Nash equilibrium as long as the equilibrium lies in the interior of the simplex~\cite{piliouras2014optimization}. If the  game does not admit interior equilibria then the dynamics converge to the subspace of strategies defined by the equilibrium of maximum support.\footnote{Since in zero-sum games the set of Nash equilibria is a convex polytope, the notion of equilibrium of maximum support is well defined. For example, take the equilibrium that corresponds to the barycenter of the Nash polytope.} In that space the behavior of the dynamics are once again recurrent. 
 In fact, this result generalizes for all continuous-time variants of the well known regret-minimizing family of Follow-the-Regularized Leader (FTRL) dynamics~\citep{GeorgiosSODA18}. 
  Behind this regularity in behavior lies the fact that these dynamics are--after an appropriate reparametrization--Hamiltonian \citep{Hofbauer96,bailey2019multi}.
 As in the case of the movement of, e.g., a perfect, idealized pendulum, there is a lot of hidden structure (e.g., constants of motion) in the resulting deterministic dynamics that allows for a pretty thorough understanding of these simplified models.

 Unfortunately, this level of regularity largely disappears when we move away from deterministic models towards more \textit{realistic, stochastic} ones.  Our goal will exactly be to perform a careful analysis of the properties of such stochastic models. Clearly, from the perspective of applications, be they in economics, game theory or machine learning the move towards stochastic models seems necessary. For example, in economics and game theory it is rather natural to consider the possibility of stochastic system shocks that perturb the agents' beliefs and behavior~\cite{foster,mertikopoulos2010emergence}; in fact, using stochastic methods in machine learning, e.g., based on batch learning, is the norm as it allows for much more efficient implementation of standard optimization techniques~\cite{jin2017escape,vlatakis2019efficiently}. 
  On top of the practical necessity of studying such stochastic models, 
what is even more remarkable about the case of replicator dynamics in zero-sum games is that its predicted behavior is particularly brittle under stochastic perturbations. Since the recurrent behavior is based on the existence of a constant of motion for the deterministic dynamics, the stochastic perturbations will almost certainly introduce  
some flux that will radically alter the long term behavior of the dynamics.
For similar reasons standard techniques based on stochastic approximation theory~\cite{benaim2006stochastic} are not very informative, since in the deterministic system all states are internally chain transitive and thus different ideas as needed. 
Thus some critical questions emerge: 

\medskip
\textit{Can we analyze natural stochastic variants of replicator dynamics in general zero-sum games? If so, what is the emergent long term behavior? Do the dynamics stabilize, cycle, or do they produce a different type of behavior altogether?}
\medskip





{\bf Our results.} 
We investigate a model of replicator dynamics  
 perturbed by multiplicative Brownian noise with a diffusion matrix  
  satisfying two typical main features; firstly, an orthogonality property to keep the simplex invariant under the stochastic dynamics and, secondly, a mild form of ellipticity. 
 The first property requires that any column of the diffusion matrix at a position vector $x$ is orthogonal to $x$ implying that the noise preserves the total (probability) mass of the strategies, i.e.~does not destroy the interpretation as mixed strategies. By mild form of ellipticity we mean uniform lower bounds on pairwise combinations of row vectors in the diffusion matrix in terms of their Euclidean norms added up.
  We study this model in zero-sum games and show that all invariant measures are combinations of Dirac measures at the corners, i.e.~agents asymptotically play mixed strategies with probability zero; that is, even if the game has a unique interior Nash equilibrium, as e.g.~in the standard example of Matching Pennies, the dynamics behave in antithetical way to the prediction of the unique Nash equilibrium. The agents are almost never mixing but instead spend almost all of their time playing effectively pure, i.e. non-randomizing, strategies. 
Furthermore, the supports of attracting invariant measures, i.e.~the ones where trajectories from the interior converge to, are distributed over all corners when the Nash equilibrium of the deterministic game is fully mixed; otherwise, at least for sufficiently small noise, the attracting invariant measures are supported within the part of the boundary where the Nash equilibrium of maximal support is located. The main technique of proof is using Lyapunov function theory for stochastic processes, as firstly summarized by Khasminskii \cite{Khasminskii80}.
Throughout the paper, we discuss a particular choice of the diffusion matrices 
such that the model obtains the interpretation of symmetric uncertainty around the deterministic utilities. We exemplify the general results with this particular choice of uncertainty at the hand of the classical example of Matching Pennies, i.e.~the two-player zero-sum game with one player winning if two secretly turned pennies match, in terms of heads or tails, and the other one winning if they do not match; we discuss this example in the case of fully and also in cases with a partially (i.e. non-fully) mixed Nash equilibrium.

\section{Related Work}

{\bf Replicator dynamics and stochastic variants in game theory.}
As already mentioned, replicator dynamics is a basic model of evolutionary game theory and learning in games. 
There have been several variants of replicator dynamics in terms of stochastic differential equations (SDEs) studied in the last three decades, and, without being able to discuss an exhaustive list here due to space constraints, we will briefly recall the works most strongly related to our approach.
The first stochastic model was introduced in \cite{foster} where the stochastic component was introduced directly in the replicator differential equation. The SDE reads
\[
dx_i = x_i [(Ax)_i dt - x^{\top} Ax dt + \sigma \Gamma(x) dW(t)_i],
\]
\noindent
where  $W(t)$ is standard Brownian motion, and $\Gamma(x)$ is continuous in $x$ and has the property that
$x^{\top}\Gamma(x) = \vec{0}$.  
Our model will be precisely the bimatrix, two-agent version of this model.
Given this stochastic model, Foster and Young study the behavior  of the system as the intensity of the noise $\sigma$ goes to zero and show that in the case of single agent evolutionary games the system selects among the different evolutionary stable states (ESS), when they exist. Our analysis will give a different perspective, characterizing the stochastic dynamics and the related invariant measures for fixed noise strengths, also dealing with the intricacies of the bimatrix structure.

A different stochastic model was introduced by \cite{fudenberg1992evolutionary}, which has then been further studied in \cite{BenaimHofbauerSandholm, BenaimSchreiberAtchade, HofbauerImhof, Imh05}. 
Once again this model corresponds to a one-agent evolutionary game inspired by biological single population dynamics. This model is related to that of Foster and Young, but exhibits a boundary behavior 
that appears to be more realistic from a biological perspective. 
The SDE is derived by introducing stochastic jumps in the ordinary differential equation (ODE) that describes the evolution of the total population of different subspecies before it is normalized into a probability distribution to derive the replicator ODE. The resulting SDE is of the form: 
\begin{align}\label{eq:Imhof_model}
\rmd \mathbf{X(t)} = &\left( \diag (X_1(t), \dots, X_n(t))  - \mathbf{X(t)X(t)^{\top}}\right)\nonumber\\ 
&\bigg(\mathbf{[A - diag(\sigma_1^2, \dots, \sigma_n^2)]X(t)} \, \rmd t \nonumber\\
&+ \diag(\sigma_1, \dots, \sigma_n) \,  \rmd W(t) \bigg).
\end{align} 


Modulo the difference between one and two agent games, this is similar to the specific example of our model if we replace matrix $A$ with  $A - diag(\sigma_1^2, \dots, \sigma_n^2)$.
In  \cite{Imh05}, it is shown that under suitable conditions, if an ESS exists, then X(t) is recurrent and the stationary
distribution concentrates mass in a small neighborhood of the ESS.  The survival of dominated strategies is considered along with sufficient conditions for asymptotic stochastic stability of equilibria.

The same model is also studied in \cite{HofbauerImhof} where the previous work is extended by establishing an averaging principle that relates time averages
of the process and Nash equilibria of a suitably modified game. 
In addition to also studying necessary and sufficient
conditions for the stochastic stability of pure equilibria, the authors provide
a sufficient condition for transience in terms of mixed equilibria and
definiteness of the payoff matrix. 
Such transient behavior is, in fact, the typical behavior of our model and will be characterized by invariant measures on the boundary in the present paper.

 Other results on this particular model consider robustness of permanence and impermanence under these stochastic perturbations \cite{BenaimHofbauerSandholm}, i.e.~the concentration of stationary measures around hyperbolic attractors (as opposed to the perturbation of elliptic level sets, as in our case).  Further generalizations of stochastic persistence, also applying to this model, can be found in \cite{BenaimSchreiberAtchade}. We also point the reader to related treatments of more general, not necessarily compact state space models that cover stochastic persistence in replicator dynamics \cite{HeningNgyuen2018, HeningNguyenChesson, HeningNguyenSchreiber}.

Another stochastic variant of replicator  has been studied in \cite{mertikopoulos2010emergence}. 
The authors consider multi-player games and introduce a stochastic model 
 inspired by online optimization. They consider mostly congestion games, where the deterministic replicator dynamics are known to converge, and show that the convergence to equilibria survives not just for small enough perturbations but for any  intensity level of stochastic perturbations. 
They do not analyze zero-sum games, where such structure-preserving behavior cannot be observed, as already explained above and discussed in  detail throughout the paper.

{\bf Learning in zero-sum games.}
This area has been the subject of intense study since the 1950s starting with the study of fictitious play dynamics~\cite{Brown1951,Robinson1951}. These are discrete-time dynamics where each agent tracks the frequencies of their opponent's behavior and assumes that in the next time instance they will choose a probability distribution according to this frequency distribution. The analysis of fictitious play showed that the empirical time-average frequency of each player converges to their set of max-min optimal/Nash strategies. Following those seminal works, the standard approach in analyzing numerous different learning dynamics focuses on the convergence of time-averages to equilibrium~\cite{Cesa06,Nisan:2007:AGT:1296179}.
Understanding the time-average behavior, however, does not suffice to provide a full picture.
 Time-average convergence to Nash may hold regardless of whether the system is convergent, recurrent~\cite{piliouras2014optimization,GeorgiosSODA18,bailey2019multi,papadimitriou2019game}, divergent~\cite{BaileyEC18,cheung2018multiplicative,bailey2020finite} or even formally  chaotic~\cite{cheung2019vortices,cheung2020chaos}.
 While a precise exploration of the  behaviors shown by different dynamics is well beyond our scope, it is worth noting the nature of replicator dynamics, since we will be focusing on a stochastic variant of them.
Replicator dynamics is one of the most studied dynamics in evolutionary game theory~\cite{Weibull,Sandholm10} and its flow is a smooth approximation of the well known  multiplicative weights updates algorithm~\cite{Kleinberg09multiplicativeupdates,Arora05themultiplicative}. While it converges to equilibrium in a time-average sense, its trajectories are recurrent in zero-sum games~\cite{piliouras2014optimization}. 
With the recent advent of ML applications organized around zero-sum games such as Generative Adversarial Networks (GANs), much work has focused on developing convergent techniques,~\cite{daskalakis2018training,mertikopoulos2019optimistic,gidel2019a,mescheder2018training,yazici2018unusual}.  Some of these techniques can be interpreted as regularized/perturbed versions of replicator dynamics including mutation-driven dynamics and bounded rationality models~\cite{bauer2019stabilization,perolat2020poincar,leonardos2021exploration,abe2022mutation}.
However, when moving towards more complex zero-sum games, e.g.~non-convex-concave,  which are better models of GANs than normal form bilinear games, numerous negative results show that convergence to meaningful equilibria is probably too ambitious a goal~\citep{adolphs2018local,daskalakis2018limit,vlatakis2019poincare,hsieh2020limits}.  

\section{Replicator dynamics in zero-sum games} \label{deterministiccase}

In this section, we summarize and recall some notation and main insights from deterministic replicator dynamics for zero-sum games  as the key paradigm model of learning in this paper. In this context, we will also prove a lemma concerning the supports of non-interior Nash equilibria and a new notion of anti-equilibria that will help us analyze the stochastic dynamics. Missing proofs can be found in Appendix~\ref{app:determinstic_system}.

%
\subsection{Model}
In the following, we will denote $[n] = \{1,\dots, n \}$. Given $I\subset [n]$, we define $I^c:=[n]\setminus I$. Given any $\mathcal{D} \subset \real^n$, we denote by $\interior (\mathcal{D})$ its interior, by $\overline{\mathcal{D}}$ its closure and by $\partial \mathcal D$ its boundary.

Consider a two-player game with $n$ (resp.~$m$) pure strategies for the first (resp.~second) agent and payoff matrices $\mathbf{A} = (a_{ij})$ and $\mathbf{B} =(b_{ji})$ where $i \in [n]$ 
 and $j \in [m]$. 
The two players choose probability distributions $\mathbf{x}$ and $\mathbf{y}$ encoding mixed strategies for playing the game. Formally, the mixed strategies $\mathbf{x}$, $\mathbf{y}$ lie in the closure of the respective simplices with interiors
\begin{align*}
\Delta_n &:= \{ \mathbf{x} \in (0,1)^n \,:\, x_1 + \dots + x_n =1  \},  \\
\Delta_m &:= \{ \mathbf{y} \in (0,1)^m \,:\, x_1 + \dots + x_m =1  \}.
 \end{align*}  
 In the following, we will consider the domain $\mathcal{D}:= \Delta_n \times \Delta_m$. 
We denote by $u_i$ (resp. $v_j$) the utility of the first (resp. second) agent for playing strategy $i$ (resp. $j$) when their opponent plays mixed strategy $\mathbf{y}$ (resp.~$\mathbf{x}$). 
The respective formulas are $u_i =  \{\mathbf{Ay}\}_i$,  $v_j = \{\mathbf{Bx}\}_j$.

A strategy profile (tuple of strategies) $\mathbf{(p, q)}$ is called a \emph{Nash equilibrium} if no unilateral profitable deviations exist. By linearity it suffices to check deviations to deterministic strategies, i.e. 
\begin{equation} \label{eq:NashEqu_inequ}
\forall i \in [n]:   \{\mathbf{Aq}\}_i \leq  \mathbf{p^{\top} Aq}, \ \forall j \in [m]:  \{\mathbf{Bp}\}_j \leq \mathbf{q^{\top} Bp}.
\end{equation}
A game is called zero-sum if $\mathbf{A}= - \mathbf{B^{\top}}$ , in which case we have for all $(\mathbf x, \mathbf y) \in \overline{\mathcal D}$
\begin{equation}\label{eq:zero_sum}
\mathbf{x^{\top} A y}  + \mathbf{y^{\top} B x} =0.
\end{equation}

 The existence of such equilibria in all zero-sum games follows from von Neumann's celebrated minmax theorem~\cite{neumann}.
In zero-sum games, Nash strategies are also referred to as optimal or max-min strategies.
We define a strategy profile $\mathbf{(p^*, q^*)}$ as an anti-equilibrium if it is a Nash equilibrium of the game with payoff matrices  $-\mathbf{A}$  and $-\mathbf{B}$. Equivalently, this is a Nash equilibrium where each agent interprets the payoff   $\mathbf{x^{\top} Ay}$  (resp.  $\mathbf{y^{\top} B x}$) as costs to be minimized. In algebraic form, we have    
\begin{align} \label{eq:Antiequ_inequ}
\forall i \in [n]:&  \{\mathbf{Aq^*}\}_i \geq  \mathbf{p^{*\top} Aq^*}, \nonumber \\ 
\forall j \in [m]:&  \{\mathbf{Bp^*}\}_j \geq \mathbf{q^{*\top} Bp^*}.
\end{align}
A Nash equilibrium (more generally a strategy profile) $\mathbf{(p, q)}$ is called interior (or fully mixed) if $p_i, q_j>0$ for all $i,j$. The support of a mixed strategy $\mathbf{p}$ (resp. $\mathbf{q}$) corresponds to the set of strategies that are played with positive probability in that strategy, i.e. $supp(\mathbf{p})= \{ i \in [n] : p_i>0\}.$ 
An interior strategy profile has full support.
If a Nash equilibrium is interior then all inequalities in the definition of equilibrium \eqref{eq:NashEqu_inequ} hold as equalities. Thus, any interior Nash equilibrium is also an interior anti-equilibrium, with equality in \eqref{eq:Antiequ_inequ}.
 
In addition to these well-known preliminaries, we provide the following observation concerning the relation between supports of Nash equilibria and anti-equilibria. 
 \begin{lem} \label{lem:supports}
Given a generic\footnote{The complement of the considered set of  games is closed and has Lebesgue measure zero in the space of all zero-sum games.} zero-sum game, let $(p,q)$ and $(p^*,q^*)$ be a Nash equilibrium and anti-equilibrium of maximum support\footnote{ By an equilibrium of maximum support, we mean an equilibrium whose support includes all strategies in the support of any other equilibrium strategy. The existence of such equilibria in all zero-sum games follows from von Neumann's celebrated minmax theorem that implies that the equilibrium strategies for each agent form a non-empty convex polytope~\cite{neumann}.}, respectively. Then either both of them are interior, 
 or, 
$\big(supp(p) \cup supp(q)\big) \setminus \big(supp(p^*) \cup supp(q^*) \big)\neq \emptyset$ and 
$\big(supp(p^*) \cup supp(q^*)\big) \setminus \big(supp(p) \cup supp(q) \big)\neq \emptyset,$
i.e., the supports of equilibria and anti-equilibria are not subsets of each other.
\end{lem}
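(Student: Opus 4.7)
The plan is to split the argument based on whether $(p,q)$ is interior. If it is, the classical zero-sum indifference principle gives the anti-equilibrium conclusion immediately: from \eqref{eq:zero_sum} and the Nash inequalities \eqref{eq:NashEqu_inequ}, the positivity of every $p_i$ and $q_j$ forces these inequalities to hold with equality, i.e., $(\mathbf{A} q)_i = p^\top \mathbf{A} q$ for all $i$ and symmetrically $(\mathbf{B} p)_j = q^\top \mathbf{B} p$ for all $j$. But these equalities are exactly the conditions in \eqref{eq:Antiequ_inequ}, so $(p,q)$ is itself an interior anti-equilibrium, and by the maximum-support property $(p^*,q^*)$ must also be interior. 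The converse direction is identical, so for the rest of the proof I may assume neither $(p,q)$ nor $(p^*,q^*)$ is interior.

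Now suppose for contradiction that $\operatorname{supp}(p)\subseteq\operatorname{supp}(p^*)$ and $\operatorname{supp}(q)\subseteq\operatorname{supp}(q^*)$; the reverse inclusion is handled by symmetry, swapping the roles of Nash and anti-equilibrium. Write $v = p^\top \mathbf{A} q$ and $v^* = p^{*\top} \mathbf{A} q^*$. Because every index in $\operatorname{supp}(p)$ lies in $\operatorname{supp}(p^*)$, the equality case of \eqref{eq:Antiequ_inequ} yields $(\mathbf{A} q^*)_i = v^*$ for $i\in\operatorname{supp}(p)$, and hence $p^\top \mathbf{A} q^* = v^*$; analogously $p^{*\top}\mathbf{A} q = v^*$. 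Combining these with the Nash best-response inequalities $p^\top \mathbf{A} q^*\ge v$ (since $q$ minimizes $q'\mapsto p^\top \mathbf{A} q'$) and $p^{*\top}\mathbf{A} q\le v$ (since $p$ maximizes $p'\mapsto p'^\top \mathbf{A} q$), I obtain $v=v^*$.

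The decisive identity is then $(p^*-p)^\top\mathbf{A}(q^*-q) = v^* - v^* - v^* + v = 0$, which I expand index by index. Terms with $i\in\operatorname{supp}(p)$ vanish since $(\mathbf{A} q)_i = v = v^* = (\mathbf{A} q^*)_i$; terms with $i\notin\operatorname{supp}(p^*)$ vanish since $p_i = p^*_i = 0$; and terms with $i\in\operatorname{supp}(p^*)\setminus\operatorname{supp}(p)$ are non-negative, since $p^*_i>0$ while $(\mathbf{A}(q^*-q))_i = v^* - (\mathbf{A} q)_i \ge v - v = 0$. A non-negative sum equal to zero forces every summand to vanish, giving $(\mathbf{A}(q^*-q))_i=0$ for all $i\in\operatorname{supp}(p^*)$, and dually $(\mathbf{A}^\top(p^*-p))_j=0$ for all $j\in\operatorname{supp}(q^*)$. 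Genericity of $\mathbf{A}$ now plays a double role: it makes the submatrix of $\mathbf{A}$ indexed by $\operatorname{supp}(p^*)\times\operatorname{supp}(q^*)$ square and invertible, so the kernel equations force $q=q^*$ and $p=p^*$; and it upgrades the off-support anti-equilibrium inequality to a strict one, $(\mathbf{A} q^*)_i > v^*$ for $i\notin\operatorname{supp}(p^*)$. Applying this strict inequality at the common $q=q^*$ together with the Nash inequality $(\mathbf{A} q)_i \le v = v^*$ from \eqref{eq:NashEqu_inequ} yields a contradiction unless $\operatorname{supp}(p^*)=[n]$, and the symmetric argument on the columns gives $\operatorname{supp}(q^*)=[m]$; but then $(p^*,q^*)$ would be interior, contradicting the standing assumption. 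The delicate point, which is where I expect the main obstacle to lie, is formulating ``generic'' so that both structural properties (invertibility of the support submatrix and strictness of the off-support anti-equilibrium inequalities) hold simultaneously across all candidate support pairs, which amounts to excluding a single algebraic subset of Lebesgue measure zero in the space of zero-sum payoff matrices.
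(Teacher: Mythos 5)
Your proof is correct, but it takes a genuinely different route from the paper's. The paper argues structurally: assuming the nesting $\supp(p)\subseteq\supp(p^*)$, $\supp(q)\subseteq\supp(q^*)$, it restricts to the subgame on $\supp(p^*)\times\supp(q^*)$, where $(p^*,q^*)$ is interior and hence simultaneously an equilibrium and an anti-equilibrium of that subgame; quasi-strictness of maximum-support (anti-)equilibria shows $(p,q)\neq(p^*,q^*)$, and convexity of the Nash set then produces a continuum of equilibria in the subgame, which by van Damme's result is a non-generic event. You instead run a complementary-slackness computation: establishing $v=v^*$, expanding $(p^*-p)^{\top}\mathbf{A}(q^*-q)=0$ termwise to force $q^*-q$ and $p^*-p$ into the kernels of the support submatrix, and invoking genericity (square, invertible support submatrices) to conclude $(p,q)=(p^*,q^*)$, after which quasi-strictness contradicts non-interiority. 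The two proofs localize the genericity assumption differently: the paper offloads it entirely to a single citable fact (no subgame of a generic zero-sum game has a continuum of equilibria), whereas you must verify that balanced support cardinalities, invertibility of all candidate support submatrices, and quasi-strictness hold simultaneously outside a closed null set --- which is true and standard, but is real work you correctly flag as the delicate point. Your version is more self-contained and makes explicit that, generically, nested supports force the equilibrium and anti-equilibrium to coincide, which is the same degeneracy the paper detects as a continuum of equilibria; the paper's version is shorter and avoids any linear algebra on the payoff matrix. One small caution in your linear-algebra step: $q^*-q$ is determined by the column-kernel of $\mathbf{A}_{\supp(p^*),\supp(q^*)}$ and $p^*-p$ by its row-kernel, so you genuinely need the submatrix to be square (equal support sizes), not merely of full column rank; this is exactly the nondegeneracy condition you should fold into your definition of generic.
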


\begin{proof}
The anti-equilibria are merely Nash equilibria of the zero-sum game defined by payoff matrices $-\mathbf{A}$  and $-\mathbf{B}$. The fact that both the set of Nash equilibria as well as the set of anti-equilibria are convex polytopes follows from von Neumann's celebrated minmax theorem \cite{neumann}. Since equilibrium strategies correspond to convex polytopes the notion of maximum support is well defined; take, e.g., the barycenter of each polytope. If any such equilibrium is fully mixed, then it is clearly also an anti-equilibrium. Suppose not. It suffices to show that it cannot be the case that 
$\big(supp(p) \cup supp(q)\big) \subset \big(supp(p^*) \cup supp(q^*) \big)$\footnote{We can symmetrically apply the argument for the zero-sum game $\big(-\mathbf{A}, -\mathbf{B}\big)$ to derive  that $\big(supp(p^*) \cup supp(q^*)\big) \subset \big(supp(p) \cup supp(q) \big)$ is not possible.} 
 for any zero-sum game $\big(\mathbf{A}, \mathbf{B}\big)$, unless there exists a subgame of this zero-sum game with a continuum of Nash equilibria, which by \cite{van1991stability}  implies that the game $(A,B)$ is non-generic and lies in a closed zero Lebesgue measure set within the set of all zero-sum games.

Indeed, suppose that $supp(p) \subset supp(p^*)$ and  $ supp(p) \subset  supp(q^*)$. Since both the equilibrium $(p,q)$ and anti-equilibrium $(p^*,q^*)$ are of maximum support then they are quasi-strict in the sense that deviating outside their support results to payoffs to the agents that are strictly smaller (larger resp.) than their equilibrium values~\cite{GeorgiosSODA18}.  Hence,  $(p,q), (p^*,q^*)$ are not the same strategy profile. Let us consider the subgame defined by the set of strategies in the support of $(p^*,q^*)$. In this subgame, the anti-equilibrium  $(p^*,q^*)$
is of full support (interior) and thus it is also an (interior) equilibrium of the subgame. By the convexity of the set of Nash equilibria in zero-sum games any convex combination of $(p,q)$ and $(p^*,q^*)$ is also a Nash equilibrium. Thus, the subgame in question has a continuum of equilibria, completing the proof.
\end{proof}

The implication of the above lemma
is that, at in almost all zero-sum games without fully mixed equilibrium,  we can find a pair of equilibrium and anti-equilibrium strategies profiles with at least one strategy played only at equilibrium and at least one strategy played only at anti-equilibrium.

\subsection{Stability and recurrence results for replicator dynamics}
\label{sec:stability_replicator}

The replicator equation is a key model of evolutionary game theory as well as online optimization used to update the mixed strategies of the agents in the direction of improving utility.
In any game, the probability updates for the mixed strategy $\mathbf{x}$ of an agent with a utility vector $\mathbf{u}$ is as follows: 
\begin{equation}  \label{eq:replicator}
\begin{array}{rl}
 \dot x_i &= x_i \left(u_i -  \sum_j x_j u_j \right).
\end{array}
\end{equation}
In the specific case of two player games the replicator equations are given by the ordinary differential equation (ODE)

\begin{equation}  \label{eq:replicator2}
\begin{array}{rl}
 \dot x_i &= x_i \left(\{\mathbf{Ay}\}_i - \mathbf{x^{\top} Ay}\right)\,,\\
 \dot y_j &= y_j \left(\{\mathbf{Bx}\}_j - \mathbf{y^{\top} Bx}\right)\,,
\end{array}
\end{equation}
where $(\mathbf{x}, \mathbf{y}) \in \mathcal{D}= \Delta_n \times \Delta_m$. A Nash equilibrium is always also an equilibrium of the ODE~\eqref{eq:replicator2}.

\subsubsection{Games with Nash interior equilibria}

Here we discuss some variants of known results for (deterministic) replicator dynamics in zero-sum games (adapting arguments from \cite{piliouras2014optimization}). For the sake of completeness as well as to help ease into the analysis of stochastic replicator dynamics we provide the missing proofs in the Appendix~\ref{app:determinstic_system}.


\begin{thm}
\label{thm:constant_of_motion}
Consider the flow of the replicator dynamics when applied to a zero-sum game 
 that has an interior (i.e. fully mixed) Nash equilibrium $(\mathbf{p},\mathbf{q})$.
 Then given any (interior) starting point $x_0\in \mathcal D$,
  the cross entropy
\begin{equation} \label{eq:entropy}
V\big((\mathbf{p},\mathbf{q});(\mathbf{x}(t),\mathbf{y}(t)) \big) = - \sum_i p_i\ln x_i(t) - \sum_j q_j\ln y_j(t)
\end{equation}
   between the Nash equilibrium  $(\mathbf{p},\mathbf{q})$ and the trajectory $(\mathbf{x}(t),\mathbf{y}(t))$ of the system is a \textit{constant of motion}, \textit{i.e.}
    $\frac{\rmd  V\big((\mathbf{p},\mathbf{q});(\mathbf{x}(t),\mathbf{y}(t)) \big)}{\rmd  t} |_{t=t'}= 0$.
 Otherwise, let $(\mathbf{p},\mathbf{q})$ (resp. $(\mathbf{p^*},\mathbf{q^*})$) be a not fully mixed Nash  equilibrium (resp. anti-equilibrium) of maximal support on the boundary $\partial \mathcal D$; then for each starting point $x_0\in \mathcal D$  
 and for all $t'\geq 0$ we have
 $\frac{\rmd  V\big((\mathbf{p},\mathbf{q});(\mathbf{x}(t),\mathbf{y}(t)) \big)}{\rmd  t} |_{t=t'}< 0$
 and 
  $\frac{\rmd  V\big((\mathbf{p^*},\mathbf{q^*});(\mathbf{x}(t),\mathbf{y}(t)) \big)}{\rmd  t} |_{t=t'}>0.$  
\end{thm}
\begin{proof}
   See Appendix~\ref{app:determinstic_system}.
\end{proof}

The quantity $\sum_i p_i\ln p_i - \sum_i p_i\ln x_i(t)$ is always non-negative and it is equal to zero if and only if the distributions $\mathbf{p}$ and $\mathbf{x}(t)$ are equal to each other.
So we have a notion of (non-symmetric) pseudo-distance  which is known as the Kullback-Leiber (K-L) divergence. 
K-L divergence is jointly convex, i.e., both in $\mathbf{x}$ as well as in $\mathbf{p}$ and denoted by $D_{\mathrm{KL}}(\mathbf{p} \| \mathbf{x}(t))$. 
This quantity differs from $ \sum_i p_i\ln x_i(t)$ by a constant. Hence, we establish the following corollary.

\begin{cor}
\label{cor:KL}
Consider the flow of replicator dynamics when applied to a zero sum game with a fully mixed Nash equilibrium $(\mathbf{p},\mathbf{q})$.
  Given any interior 
 starting point $(\mathbf{x}(0),\mathbf{y}(0)) \in \mathcal{D},$  the non-negative quantity 
 $D_{\mathrm{KL}}(\mathbf{p} \| \mathbf{x}(t)) +  D_{\mathrm{KL}}(\mathbf{q} \| \mathbf{y}(t))$   is time invariant.
  Otherwise, let $(\mathbf{p},\mathbf{q})$  (resp.  $(\mathbf{p^*},\mathbf{q^*})$) be a not fully mixed Nash equilibrium (resp. anti-equilibrium) of maximal support  on $\partial D$; then for each starting point $x_0\in \mathcal D$  
 and for all $t'\geq 0$ we have
  $\frac{d\big(D_{\mathrm{KL}}(\mathbf{p} \| \mathbf{x}(t)) +  D_{\mathrm{KL}}(\mathbf{q} \| \mathbf{y}(t))\big)}{dt} |_{t=t'}< 0.$ and
   $\frac{d\big(D_{\mathrm{KL}}(\mathbf{p^*} \| \mathbf{x}(t)) +  D_{\mathrm{KL}}(\mathbf{q^*} \| \mathbf{y}(t))\big)}{dt} |_{t=t'}> 0.$ 
\end{cor}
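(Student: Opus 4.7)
The plan is to deduce this corollary directly from Theorem~\ref{thm:constant_of_motion} by expressing the sum of Kullback--Leibler divergences as a simple shift of the cross entropy $V$. Writing out the definition,
\begin{equation*}
D_{\mathrm{KL}}(\mathbf{p} \| \mathbf{x}(t)) = \sum_i p_i \ln p_i - \sum_i p_i \ln x_i(t),
\end{equation*}
and similarly for $D_{\mathrm{KL}}(\mathbf{q} \| \mathbf{y}(t))$, one sees that
\begin{equation*}
D_{\mathrm{KL}}(\mathbf{p} \| \mathbf{x}(t)) + D_{\mathrm{KL}}(\mathbf{q} \| \mathbf{y}(t)) = V\big((\mathbf{p},\mathbf{q});(\mathbf{x}(t),\mathbf{y}(t))\big) + C,
\end{equation*}
where $C := \sum_i p_i \ln p_i + \sum_j q_j \ln q_j$ depends only on the fixed reference point $(\mathbf{p},\mathbf{q})$, hence is independent of $t$.

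Because $C$ is constant along trajectories, differentiating in $t$ annihilates it and one obtains
\begin{equation*}
\frac{\rmd}{\rmd t}\Big(D_{\mathrm{KL}}(\mathbf{p} \| \mathbf{x}(t)) + D_{\mathrm{KL}}(\mathbf{q} \| \mathbf{y}(t)) \Big) = \frac{\rmd V\big((\mathbf{p},\mathbf{q});(\mathbf{x}(t),\mathbf{y}(t))\big)}{\rmd t}.
\end{equation*}
All three assertions of the corollary are therefore immediate consequences of the corresponding assertions of Theorem~\ref{thm:constant_of_motion}: invariance in the interior Nash case, strict decrease for a Nash equilibrium of maximal support on $\partial\mathcal D$, and strict increase for an anti-equilibrium of maximal support on $\partial\mathcal D$. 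Non-negativity of $D_{\mathrm{KL}}(\mathbf{p} \| \mathbf{x}(t)) + D_{\mathrm{KL}}(\mathbf{q} \| \mathbf{y}(t))$ is a standard consequence of Gibbs' inequality (equivalently, of Jensen applied to $-\ln$), and equality to zero characterizes the case $(\mathbf{x}(t),\mathbf{y}(t)) = (\mathbf{p},\mathbf{q})$.

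There is essentially no new obstacle here: the content is absorbed into Theorem~\ref{thm:constant_of_motion}, and the only thing one must verify is the constant-shift relation between $V$ and the KL sum, which holds pointwise in $t$ and requires no regularity beyond the positivity of the coordinates $x_i(t),y_j(t)$ (guaranteed since the interior of $\mathcal D$ is invariant under the replicator flow~\eqref{eq:replicator2}). Thus the proof reduces to one line of algebra plus a citation of the previous theorem.
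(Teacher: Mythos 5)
Your argument is correct and is exactly the route the paper takes: the text immediately preceding the corollary observes that the KL sum differs from the cross entropy $V$ by the additive constant $\sum_i p_i\ln p_i + \sum_j q_j\ln q_j$, so all three claims follow by differentiating and invoking Theorem~\ref{thm:constant_of_motion}. Nothing is missing.
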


Using this property as well as the fact that the replicator equations are diffemorphic to a system that preserves Lebesgue measure it is possible to show that the dynamics of zero-sum games with interior Nash equilibrium are Poincar\'{e} recurrent  \cite{piliouras2014optimization}.

\subsubsection{Games without interior Nash equilibria}
In this section, we consider in more detail games without interior Nash equilibria, such that a full picture of all possible dynamics can be given also in the stochastic case later on;
the analysis in this section follows mostly from \cite{piliouras2014optimization}.
The set of  equilibria in  
zero-sum games is convex. 
Such games exhibit a unique maximal support of Nash equilibrium strategies,\footnote{Indeed, if there exist two equilibria of maximal support with distinct supports, then the mixed strategy profile where each agent chooses uniformly at random to follow his randomized strategy in one of the two distributions is also a Nash equilibrium and has larger support than each of the original distributions.}
whose corresponding index sets we denote by $I$ and $J$; hence, the equilibrium is interior with respect to the simplex of this support.  We introduce
\begin{equation} \label{eq:SxF}
S_{F}^x \subset \partial \mathcal D  := \begin{cases}
    \{ x_i =0, \, \forall i \in F \}, \quad & \text{if } \ \emptyset \neq F \subsetneq [n] \\
    \partial \mathcal D, \quad & \text{otherwise},
\end{cases} 
\end{equation}
and
\begin{equation} \label{eq:SyG}
S_{G}^y \subset \partial \mathcal D := \begin{cases}
    \{ y_j =0, \, \forall j \in G \}, \quad & \text{if } \ \emptyset \neq G \subsetneq [m] \\
    \partial \mathcal D, \quad & \text{otherwise},
\end{cases} 
\end{equation}
and set
\begin{equation} \label{eq:Delta1}
\Delta_{\partial,1} := S_{I^c}^x \cap S_{J^c}^y,
\end{equation}
and
\begin{equation} \label{eq:Delta2}
\Delta_{\partial,2} := S_{I}^x \cap S_{J}^y,
\end{equation}
where $I^c, J^c$ denote the complements of $I,J$ respectively. For an anti-equilibrium, we define $\Delta_{\partial,1}^*$ and $\Delta_{\partial,2}^*$ accordingly.

\begin{thm}[\cite{piliouras2014optimization}]
\label{thm:notfullymixed}
If the replicator flow $\Phi$ does not have an interior equilibrium, then
given any interior starting point $x\in \mathcal D$,
the orbit $\Phi(x,\cdot)$ converges to the boundary.  
Furthermore, if $(\textbf p, \textbf q)$ is an equilibrium of maximum support corresponding with $\Delta_{\partial,1} \subset \partial \mathcal D$,
then its limit set $\omega(x)$ satisfies  $\omega(x)\subset \text{int}(\Delta_{\partial,1})$. 
\end{thm}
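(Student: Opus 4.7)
The plan is to use the cross-entropy $V((\mathbf{p},\mathbf{q}); (\mathbf{x}(t),\mathbf{y}(t)))$ from Theorem~\ref{thm:constant_of_motion} as a strict Lyapunov function along interior trajectories, and to extract both qualitative and quantitative information from its monotone decrease. For convergence to $\partial \mathcal D$, observe that $V \geq 0$, so the monotonically decreasing $t \mapsto V(\mathbf{x}(t),\mathbf{y}(t))$ has a finite limit $V_\infty \in [0, V_0]$ with $V_0 := V(\mathbf{x}(0),\mathbf{y}(0))$. I would then argue by contradiction: suppose $z \in \omega(x) \cap \mathcal D$; continuity of $V$ at an interior point forces $V(z)=V_\infty$, and by flow-invariance of $\omega(x)$ the same identity must hold along the entire orbit $\Phi(z,\cdot)$, contradicting the strict inequality $\dot V|_z<0$ guaranteed at interior points by Theorem~\ref{thm:constant_of_motion}. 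Since $\overline{\mathcal D}$ is compact and $\omega(x)$ is nonempty, this forces $\omega(x) \subset \partial \mathcal D$.

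For the sharper statement $\omega(x) \subset \interior(\Delta_{\partial,1})$, I would combine two complementary consequences of the monotonicity of $V$. First, from $0 \leq -p_i \ln x_i(t) \leq V(\mathbf{x}(t),\mathbf{y}(t)) \leq V_0$, one immediately obtains the uniform lower bound $x_i(t) \geq e^{-V_0/p_i} > 0$ for every $i \in I$, and analogously $y_j(t) \geq e^{-V_0/q_j} > 0$ for every $j \in J$. Hence every $z \in \omega(x)$ is strictly positive on the equilibrium support, so $\omega(x)$ lies in the union of boundary faces containing $\Delta_{\partial,1}$; in particular $\omega(x) \cap \Delta_{\partial,2} = \emptyset$.

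Next I would upgrade $\dot V < 0$ to a \emph{quantitative} estimate. The standard computation behind Theorem~\ref{thm:constant_of_motion} gives $\dot V = -(\mathbf{p}^\top \mathbf{A}\mathbf{y} - \mathbf{x}^\top \mathbf{A}\mathbf{q})$. Quasi-strictness of the Nash equilibrium of maximum support (invoked via Lemma~\ref{lem:supports} and \cite{GeorgiosSODA18}) yields constants $\delta,\delta'>0$ with $\{\mathbf{Aq}\}_i \leq \mathbf{p}^\top\mathbf{Aq}-\delta$ for every $i\in I^c$ and $\{\mathbf{Bp}\}_j \leq \mathbf{q}^\top\mathbf{Bp}-\delta'$ for every $j\in J^c$, while the corresponding inequalities are equalities on $I$ and $J$. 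Combining these with the zero-sum identity $\mathbf{x}^\top\mathbf{A}\mathbf{y}+\mathbf{y}^\top\mathbf{B}\mathbf{x}=0$ yields
\begin{equation*}
\dot V(\mathbf{x}(t),\mathbf{y}(t)) \;\leq\; -\delta\sum_{i\in I^c} x_i(t)\;-\;\delta'\sum_{j\in J^c} y_j(t).
\end{equation*}
Integrating from $0$ to $\infty$ and using that $V$ is bounded below shows that $t \mapsto \sum_{i\in I^c} x_i(t)+\sum_{j\in J^c} y_j(t)$ belongs to $L^1([0,\infty))$. Since the replicator vector field is smooth and bounded on $\overline{\mathcal D}$, each coordinate is Lipschitz (hence uniformly continuous) in $t$, and Barbalat's lemma forces $x_i(t)\to 0$ for every $i\in I^c$ and $y_j(t)\to 0$ for every $j\in J^c$. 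Combined with the previous paragraph's lower bounds on $I$ and $J$, this places $\omega(x)$ exactly inside $\interior(\Delta_{\partial,1})$.

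The main obstacle I anticipate is the quantitative step: converting the strict inequality of Theorem~\ref{thm:constant_of_motion} into a \emph{uniform} linear lower bound on $-\dot V$ in terms of the offending coordinates $\sum_{i\in I^c}x_i+\sum_{j\in J^c}y_j$. This relies essentially on the quasi-strictness of the equilibrium of maximum support (Lemma~\ref{lem:supports}, \cite{GeorgiosSODA18}) and on carefully combining both Nash inequalities with the zero-sum identity so that the off-support masses on each side produce their own linear penalty. Once this linear bound is in place, the LaSalle-type contradiction for the first claim and the Barbalat application for the second are standard.
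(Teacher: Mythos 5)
Your proposal is correct and follows essentially the same route as the paper: the cross-entropy $V$ as a strictly decreasing Lyapunov function with a finite limit, a Barbalat-type argument (the paper's Lemma~\ref{eq:derivative}) to force the decay of the off-support coordinates, and the finiteness of $\lim_t V$ to keep the in-support coordinates bounded away from zero, with quasi-strictness of the maximum-support equilibrium as the key input in both cases. If anything, your explicit bound $-\dot V \geq \delta\sum_{i\in I^c}x_i + \delta'\sum_{j\in J^c}y_j$ spells out a step the paper leaves implicit when it passes from $\dot V \to 0$ to the support inclusion.
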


\begin{proof}
    See Appendix~\ref{app:determinstic_system}.
\end{proof}

\begin{cor}
Consider a generic zero-sum game with no interior Nash equilibrium. If  $(\textbf p^*, \textbf q^*)$ is an anti-equilibrium of maximum support,  corresponding with  $\Delta^*_{\partial,1} \subset \partial \mathcal D$, then 
$\lim_{t \rightarrow \infty}\big(D_{\mathrm{KL}}(\mathbf{p^*} \| \mathbf{x}(t)) +  D_{\mathrm{KL}}(\mathbf{q^*} \| \mathbf{y}(t))\big)=  \lim_{t \rightarrow \infty}  V\big((\mathbf{p^*},\mathbf{q^*});(\mathbf{x}(t),\mathbf{y}(t)) \big) = +\infty.$
\end{cor}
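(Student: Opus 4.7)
The plan is to combine Theorem~\ref{thm:notfullymixed} (which controls where the trajectory accumulates) with Lemma~\ref{lem:supports} (which forces the anti-equilibrium to place mass outside the Nash support), and then read off divergence of the cross entropy termwise. The key observation is that $V((\mathbf{p}^*,\mathbf{q}^*);(\mathbf{x}(t),\mathbf{y}(t)))$ is a sum of non-negative quantities $-p^*_i\ln x_i(t)$ and $-q^*_j\ln y_j(t)$, so to prove divergence it suffices to find a single such term that blows up while no other term goes to $-\infty$ (which cannot happen anyway, as $x_i(t),y_j(t)\le 1$).

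First, I would use Theorem~\ref{thm:notfullymixed} to deduce that for any interior initial condition, $\omega(x)\subset\interior(\Delta_{\partial,1})$. Since $\omega(x)$ is a compact subset of the relatively open set $\interior(\Delta_{\partial,1})$, a standard compactness/contradiction argument yields a $\delta>0$ and a time $T_0\ge 0$ such that
\[
x_i(t)\ge \delta \text{ for all } i\in I,\qquad y_j(t)\ge \delta \text{ for all } j\in J,\qquad t\ge T_0,
\]
while simultaneously $x_i(t)\to 0$ for $i\in I^c$ and $y_j(t)\to 0$ for $j\in J^c$. Consequently, the partial sums $-\sum_{i\in I}p^*_i\ln x_i(t)$ and $-\sum_{j\in J}q^*_j\ln y_j(t)$ are uniformly bounded for $t\ge T_0$.

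Second, because the game is generic and has no interior Nash equilibrium, Lemma~\ref{lem:supports} applies to $(\mathbf{p},\mathbf{q})$ (Nash equilibrium of maximum support) and $(\mathbf{p}^*,\mathbf{q}^*)$ (anti-equilibrium of maximum support), giving
\[
\big(\supp(\mathbf{p}^*)\cup\supp(\mathbf{q}^*)\big)\setminus\big(\supp(\mathbf{p})\cup\supp(\mathbf{q})\big)\neq\emptyset.
\]
Hence there exists either $i^*\in\supp(\mathbf{p}^*)$ with $i^*\in I^c$, or $j^*\in\supp(\mathbf{q}^*)$ with $j^*\in J^c$. In the first case, $x_{i^*}(t)\to 0$ while $p^*_{i^*}>0$, so $-p^*_{i^*}\ln x_{i^*}(t)\to +\infty$; the second case is symmetric. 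All other summands are non-negative, so $V((\mathbf{p}^*,\mathbf{q}^*);(\mathbf{x}(t),\mathbf{y}(t)))\to +\infty$. Since $D_{\mathrm{KL}}(\mathbf{p}^*\|\mathbf{x}(t))+D_{\mathrm{KL}}(\mathbf{q}^*\|\mathbf{y}(t))$ differs from $V$ only by the finite constant $\sum_i p^*_i\ln p^*_i+\sum_j q^*_j\ln q^*_j$, the same divergence holds for the KL expression.

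The most delicate step is the first one: extracting a uniform lower bound $\delta$ for the in-support coordinates along the full trajectory from the mere containment of the $\omega$-limit set in an open face. I would execute it by contradiction, taking a sequence $t_n\to\infty$ along which some $x_i(t_n)\to 0$ with $i\in I$, then passing to a subsequential limit to obtain a point of $\omega(x)$ with vanishing $i$-th coordinate, contradicting $\omega(x)\subset\interior(\Delta_{\partial,1})$. The remaining steps are algebraic bookkeeping once this bound and the support-disjointness from Lemma~\ref{lem:supports} are in hand.
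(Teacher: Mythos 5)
Your proposal is correct and follows essentially the same route as the paper's own proof: combine Theorem~\ref{thm:notfullymixed} (in-support coordinates stay bounded away from zero, out-of-support coordinates vanish) with Lemma~\ref{lem:supports} (the anti-equilibrium support is not contained in the Nash support) to exhibit a single diverging non-negative summand of $V^*$. Your write-up merely makes explicit the compactness argument for the uniform lower bound $\delta$, which the paper leaves implicit.
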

\begin{proof}
By Lemma~\ref{lem:supports} and the genericity of the zero-sum game we know that the maximal support index sets $I, J$ of equilibria and $I^*, J^*$ of anti-equilibria are not strictly contained in each other. By Theorem \ref{thm:notfullymixed} we know that the probabilities for all strategies not lying in $I,J$ converge to zero along trajectories, whereas the probabilities of all strategies in  $I,J$ stay bounded away from zero. Hence, there exists at least one summand in   
\begin{align*}
V^*(t)&:=V\big((\mathbf{p^*},\mathbf{q^*});(\mathbf{x}(t),\mathbf{y}(t)) \big) \\
&=  \sum_i (-p^*_i)\ln x_i(t) + \sum_j (-q^*_i)\ln y_j(t),
\end{align*}  
that diverges to infinity. Since all summands are non-negative , the quantity  $V^*(t)$ diverges, in fact monotonically, to infinity.
\end{proof}

\begin{ex}  \label{ex:deterministic}
In order to exemplify the distinction between dynamics for zero-sum games with interior Nash equilibrium and non-interior Nash equilibrium, as expressed in Theorem~\ref{thm:constant_of_motion}, we consider the following simple example of Matching Pennies (MP). The game is formalized as $2\times 2$-problem with payoff matrix
$
\mathbf{A} = 
\begin{pmatrix}
1 & -1 \\
-1 &  1
\end{pmatrix}
$
such that the dynamics, written in integral form for convenience of the stochastic perturbations to follow, are given by
\begin{equation} \label{eq:MP_2times2_determ}
\begin{array}{rl}
\rmd X_1(t) &= 2 X_1(t)(1- X_1(t))(2 Y_1(t)-1) \,
\rmd t,\\
\rmd Y_1(t) &= 2 Y_1(t)(1-Y_1(t))(1 - 2 X_1(t)) \,  \rmd t\,,
\end{array}
\end{equation}
and $X_2 = 1 - X_1$, $Y_2 = 1 -Y_1$. There is an interior Nash equilibrium, given by
$\mathbf p =\left( \frac{1}{2}, \frac{1}{2}\right), 
\mathbf q = \left( \frac{1}{2}, \frac{1}{2}\right)$.
We can modify the dynamics slightly by considering the problem with some additional strategy
$\mathbf{A} = 
\begin{pmatrix}
1 & -1 \\
-1 &  1 \\
-2 &  -2
\end{pmatrix}$
%
such that the replicator dynamics are
\begin{equation} \label{eq:MP_3times2_determ}
\begin{array}{rl}
\rmd X_1(t) &= X_1(t)[(1- X_1(t))(2 Y_1(t)-1) \\
&+ X_2(t)(2 Y_1(t)-1) + 2(1 - X_1(t) - X_2(t)) ]
\rmd t, \\
\rmd X_2(t) &= X_2(t)[(1- X_2(t))(1 -2 Y_1(t)) \\
&+ X_1(t) (1-2 Y_1(t)) + 2(1 - X_1(t) - X_2(t))]
\rmd t, \\
\rmd Y_1(t) &= 2Y_1(t)(1-Y_1(t))(X_2(t) - X_1(t)) \,  \rmd t\,,
\end{array}
\end{equation}
and $X_3 = 1 - X_1 - X_2$ and $Y_2 = 1 - Y_1$.
Now there is a non-interior Nash equilibrium of maximal support, given by
$
\mathbf p =\left( \frac{1}{2}, \frac{1}{2}, 0\right), \mathbf q = \left( \frac{1}{2}, \frac{1}{2}\right).
$
The corresponding anti-equilibrium is given by
$
\mathbf p^* =\left( 0, 0, 1\right), \mathbf q^* = \left( \frac{1}{2}, \frac{1}{2}\right).
$
Figure~\ref{fig:MP_determ} illustrates the dynamics of equations~\eqref{eq:MP_2times2_determ} and~\eqref{eq:MP_3times2_determ} respectively. As stated in Theorem~\ref{thm:constant_of_motion}, we observe trajectories coinciding with level sets of the cross entropy, taking the role of a constant of motion, for the situation with interior Nash equilibrium, and convergence to such levels sets within a subset of the boundary, corresponding with the Nash equilibrium of maximal support, in the case of no interior equilibrium.
\begin{figure}[htbp]
\centering
\begin{subfigure}[b]{0.4\textwidth}
\includegraphics[width=0.9\textwidth]{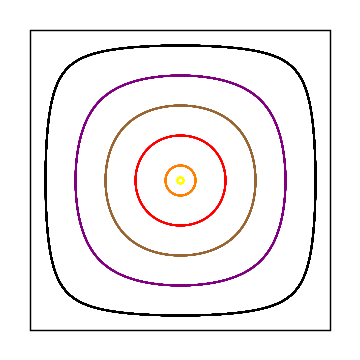} 
\caption{Sample orbits for Matching Pennies~\eqref{eq:MP_2times2_determ}} 
\end{subfigure}
\begin{subfigure}[b]{0.4\textwidth}
\includegraphics[width=0.9\textwidth]{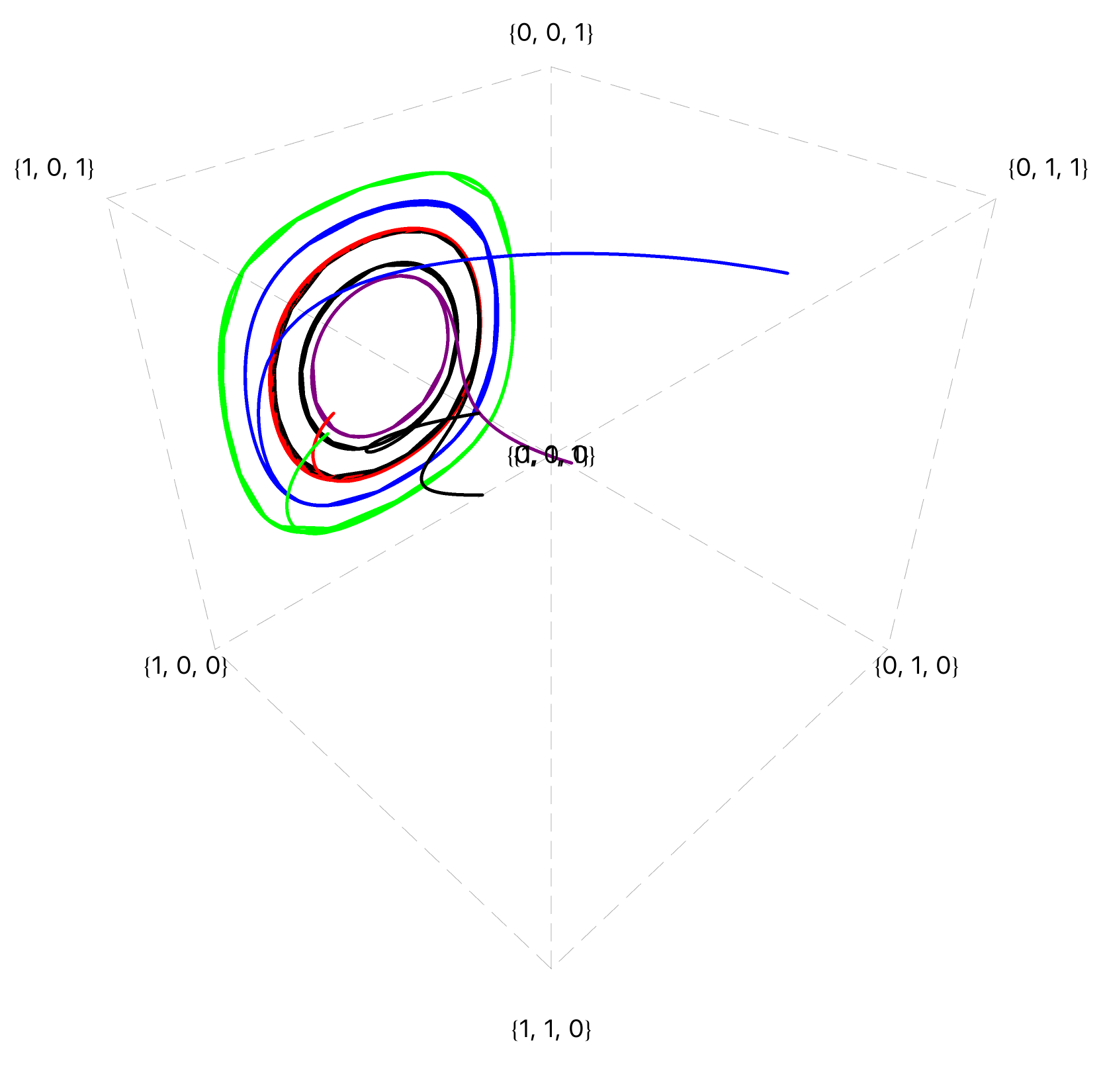}
  \caption{Sample orbits for $3\times2$ game~\eqref{eq:MP_3times2_determ}}
\end{subfigure}
\caption{Conservative dynamics with trajectories staying on levels sets of the cross entropy $V$~\eqref{eq:entropy} for replicator system with interior Nash equilibrium (a), and convergence to boundary corresponding with non-interior Nash equilibrium of maximum support for replicator system with entropy decrease (b).}
\label{fig:MP_determ}
\end{figure}
\end{ex}


\section{SDE model for stochastic replicator dynamics}
\label{sec:SDE_dynamics}
We now assume that the payoffs are exposed to random perturbations by independent Gaussian white noises.

The general model for working with stochastic perturbations of model~\eqref{eq:replicator2}, is the stochastic differential equation (SDE), in It\^{o} form,
\begin{equation} \label{eq:stoc2_general}
\begin{array}{rl}
\rmd X_i(t) &= X_i(t) \left(\{\mathbf{AY(t)}\}_i - \mathbf{X(t)^{\top} A Y(t)}\right) \rmd t \\
&+ X_i(t) (R(\mathbf{X})  \rmd W(t))_i\,,\\
\rmd Y_j(t) &= Y_j(t) \left(\{\mathbf{BX(t)}\}_j - \mathbf{Y(t)^{\top} B X(t)}\right) \rmd t \\
&+ Y_j(t) (S(\mathbf Y)  \rmd \tilde W(t))_j\,,
\end{array}
\end{equation}
where $W=(W_1, \dots, W_n)^{\top}$ and $\tilde W=(\tilde W_1, \dots, \tilde W_m)^{\top}$ are independent $n$-dimensional and $m$-dimensional Brownian motions, $(\mathbf X(0), \mathbf Y(0)) \sim \mu_0$ in $\overline{\mathcal D}$, where $\mu_0$ is some probability measure on $\overline{\mathcal D}$, and we have for all $(\mathbf X, \mathbf Y) \in \overline{\mathcal D}$
\begin{equation} \label{eq:matrix_property}
\mathbf{X}^{\top} R(\mathbf{X}) =0, \quad \mathbf{Y}^{\top} S(\mathbf{Y}) =0,
\end{equation}
where $R: \overline{\mathcal{D}} \to \mathbb{R}^{n\times n}$ and $S:\overline{\mathcal{D}} \to \mathbb{R}^{m\times m}$ are locally Lipschitz continuous.
In matrix form this equation reads
\begin{equation}  \label{eq:stoc2_matrix_general}
\hspace*{-1cm}
\begin{array}{ll}
&\rmd \mathbf{X(t)} =  \left(\diag (X_1(t), \dots, X_n(t))  - \mathbf{X(t)X(t)^{\top}}\right)\\ &\mathbf{A Y(t)} \, \rmd t 
+  \diag (X_1(t), \dots, X_n(t)) R(\mathbf{X})  \,  \rmd W(t),\\
\hspace*{-0.2cm}&\rmd \mathbf{Y(t)} = \left( \diag (Y_1(t), \dots, Y_m(t))  - \mathbf{Y(t)Y(t)^{\top}}\right)\\ &\mathbf{B X(t)} \, \rmd t 
 + \diag (Y_1(t), \dots, Y_m(t)) S(\mathbf Y) \, \rmd \tilde W(t).
\end{array}
\end{equation}
Note that property~\eqref{eq:matrix_property} means that
$ \sum_{i=1}^n X_i R_{i,j} = 0, \quad \forall j= 1, \dots,n,$
which implies for the noise term in equation~\eqref{eq:stoc2_general} that
$\sum_{i=1}^n X_i(t) (R(\mathbf{X})  \rmd W(t))_i =  \sum_{j=1}^n \left(\sum_{i=1}^n X_i(t) R_{i,j}\right) \rmd W(t)_j = 0.$
The same holds for $\mathbf Y$. Hence, we have $\sum_i \rmd X_i(t) =0$ and $\sum_j \rmd Y_j(t) =0$, and it is therefore guaranteed that $\overline{\mathcal{D}}$ is invariant with respect to equation~\eqref{eq:stoc2_general}. Hence, from the Lipschitz continuity of the drift and diffusion coefficients, we can infer that for any initial conditions $(\mathbf X(0), \mathbf Y(0)) \in \overline{\mathcal D}$, equation~\eqref{eq:stoc2_general} (and therefore also~\eqref{eq:stoc2_matrix_general}) has a unique strong solution.
 From the prefactors $X_i(t), Y_j(t)$ it is also immediate that $\partial \mathcal{D}$ and $\interior{\mathcal{D}}$ are invariant under the dynamics respectively.

We will also consider the following model, with a specific choice of the matrices $R$ and $S$, which we take as most natural for incorporating perturbations of model~\eqref{eq:replicator2}:
\begin{equation}  \label{eq:stoc2}
\begin{array}{rl}
\rmd X_i(t) &= X_i(t) \left(\{\mathbf{AY(t)}\}_i - \mathbf{X(t)^{\top} A Y(t)}\right) \rmd t \\
 &+ X_i(t) \big(-\sigma_1 X_1(t), \dots, \sigma_i(1-X_i(t)),\\
 & \dots, -\sigma_n X_n(t)\big)  \rmd W(t)\,,\\
\rmd Y_i(t) &= Y_i(t) \left(\{\mathbf{BX(t)}\}_i - \mathbf{Y(t)^{\top} B X(t)}\right) \rmd t \\
 &+ Y_i(t) \big(-\eta_1 Y_1(t), \dots, \eta_i(1-Y_i(t)),\\
 & \dots, -\eta_n Y_m(t)\big)  \rmd \tilde W(t)\,,
\end{array}
\end{equation}
with noise intensities $\sigma_1, \dots, \sigma_n > 0$ and $\eta_1, \dots, \eta_m > 0$ respectively.
In matrix form this equation reads
\begin{equation}  \label{eq:stoc2_matrix}
\begin{array}{ll}
\rmd \mathbf{X(t)} = &\left( \diag (X_1(t), \dots, X_n(t))  - \mathbf{X(t)X(t)^{\top}}\right)\\ &\big(\mathbf{A Y(t)} \, \rmd t +  \diag(\sigma_1, \dots, \sigma_n) \,  \rmd W(t) \big),\\
\rmd \mathbf{Y(t)} = &\left( \diag (Y_1(t), \dots, Y_m(t))  - \mathbf{Y(t)Y(t)^{\top}}\right)\\ &\big(\mathbf{B X(t)} \, \rmd t + \diag(\eta_1, \dots, \eta_m) \, \rmd \tilde W(t)\big).
\end{array}
\end{equation}
It is evident from Equation~\eqref{eq:stoc2_matrix} that our model describes uncertainty about the outcome of the game, at each time $t \geq0$, in terms of random fluctuations around the deterministic utility given by $\mathbf{A Y(t)}$ and $\mathbf{B X(t)}$ respectively.
It is easy to check that this is a particular version of our more general model, i.e.~the diffusion matrices in equation~\eqref{eq:stoc2} (and thereby~\eqref{eq:stoc2_matrix}) satisfy condition~\eqref{eq:matrix_property}.

\subsection{Some basic notions on Markov processes}
\label{sec:Markov_processes}
For the SDE~\eqref{eq:stoc2_general} on $\overline{\mathcal D} =\mathcal D \cup \partial \mathcal D$ and an initial condition $Z_0 =(\mathbf X_0, \mathbf Y_0) \in \overline{\mathcal D}$, we consider the time-homogeneous Markov process $(Z_t)_{t \geq 0}$ as solution to~\eqref{eq:stoc2_general}. Let $\mathcal B := \mathcal B \left(\overline{\mathcal D}\right)$ be the Borel $\sigma$-algebra and $\mathcal P$ the set of all probability measures with respect to $\mathcal B$. Then the process $(Z_t)_{t \geq 0}$ is associated with a family of probabilities $( \mathbb{P}_z)_{z \in \overline{\mathcal D}}$ on the filtered Wiener space $(\Omega, \mathcal{F}, ( \mathcal{F}_t)_{t \geq 0}, \mathbb P)$. We have
$$ \mathbb{P}_z(Z_0 = z) =1 \fa z \in \overline{\mathcal D}\,,$$
and the transition probabilities $(\hat P_t)_{t\geq 0}$ are given by
$$
  \hat P_t(z,A) = \mathbb{P}_z (Z_t \in A) \fa x \in \overline{\mathcal D} \mbox { and } A \in \mathcal B\,.
$$
The process is further associated with a semi-group of operators $(P_t)_{t\geq 0}$ given by
$$ P_t g(z) = \mathbb{E}_z [g(Z_t)]$$
for all measurable and bounded functions $g : \overline{\mathcal D}\to\R$. 
We say that a set $A \subset \overline{\mathcal D}$ is \emph{accessible} from a set $B \subset \overline{\mathcal D}$ if for every neighbourhood $U$ of $A$ and every $z \in B$, there is a $t\geq 0$ such that $P_t \mathds 1_U(z) > 0$, where $\mathds 1_U$ denotes the indicator function for the set $U$.

Writing $\mu (f) = \int f \rmd \mu$ for any $\mu \in \mathcal P$, we can introduce by duality to $P_t$ the semigroup
$$(P_t^* \mu)(f) = \mu (P_t f).$$
The set of \emph{invariant} measures (sometimes also called \emph{stationary} measures for the Markov process) is then given by
$$ \mathcal{P}_{\text{inv}} := \{ \mu \in \mathcal P \, : \, P_t^* \mu = \mu \}.$$
The set of \emph{ergodic} measures $\mathcal{P}_{\text{erg}} \subset \mathcal{P}_{\text{inv}}$ is given by the invariant measures $\mu$ such that any $P_t$-invariant measurable and bounded function $f$ is $\mu$-almost surely constant. As an application of Birhoff's Ergodic Theorem, one obtains that for any $\mu \in \mathcal{P}_{\text{erg}}$ we have for $\mu$-almost all $z$
$$ \lim_{t \to \infty} \frac{1}{t} \int_0^t P_t f(z) \rmd s = \int_{\overline{\mathcal D}} f(w) \, \rmd \mu(w)$$
and, by the usual construction of the corresponding dynamical system of the shift on a sequence space, even 
$$ \lim_{t \to \infty} \frac{1}{t} \int_0^t f(Z_s) \rmd s = \int_{\overline{\mathcal D}} f(y) \, \rmd \mu(y)$$
for $\mu$-almost all $z=Z_0$. 
We say that an invariant measure $\mu$ is a \emph{physical} measure if the above property holds for Lebesgue-almost all $z=Z_0$.

Consider the \emph{generator} or \emph{backward Kolmogorov operator} $\mathcal L$ for the Markov process given as
$$ \mathcal L f(z) = \lim_{t\downarrow 0} \frac{1}{t}\left(\mathbb{E}_z[f(Z_t)] - f(z)\right)$$
for measurable and bounded functions $f: \overline{\mathcal D} \to \R$.
Let us denote the ergodic measures for $(P_t)_{t\geq 0}$ which are supported on $\partial \mathcal D$ by $\mathcal P_{\textnormal{erg}}(\partial \mathcal D)$. Consider such an ergodic measure $\mu \in \mathcal P_{\textnormal{erg}}(\partial \mathcal D)$ and assume there is a function $V$ on $\mathcal D$ and a neighbourhood $ \supp{\mu} \subset U \subset \partial \mathcal D$ such that 
$\lim_{z \to  U} V(z) \to \infty.$
Setting $H = \mathcal L V$, we define the $H$-exponent with respect to $\mu$ as $\Lambda_{\mu}(H) = -\mu(H)$. If $\Lambda_{\mu}(H) < 0$ and $\supp(\mu)$ is accessible from $\mathcal D$, the measure $\mu$ is called \emph{attracting} with respect to $\mathcal D$ and if $\Lambda_{\mu}(H) > 0$, the measure $\mu$ is called \emph{repelling} with respect to $\mathcal{D}$.
Additionally, we define the $H$-exponents of the whole process as
\begin{align*}
\Lambda^+(H) &= - \inf_{ \mu \in \mathcal P_{\textnormal{erg}}(\partial \mathcal D) } \mu(H), \\ \Lambda^-(H) &= - \sup_{ \mu \in \mathcal P_{\textnormal{erg}}(\partial \mathcal D) } \mu(H).
\end{align*}
The process is called $H$-\emph{persistent} if $\Lambda^-(H) >0$ and $H$-\emph{nonpersistent} if $\Lambda^+(H) < 0$.

%


\subsection{The generator of the Markov process}
Recall from Theorem~\ref{thm:constant_of_motion} that in the deterministic case --- $R=S=0$ in~\eqref{eq:stoc2_general}, e.g.~$\sigma_j = \eta_j =0$ in model~\eqref{eq:stoc2} --- a crucial role is played by the entropy function
\begin{equation} \label{eq:V}
V(\mathbf{x, y}) = -\sum_i p_i\ln x_i - \sum_j q_i\ln y_j, 
\end{equation}
where $(\mathbf{p, q})$ is a Nash equilibrium for equation~\eqref{eq:replicator2}.

In order to understand the statistics of the Markov process solving equation~\eqref{eq:stoc2_general}, we study in more detail its generator $\mathcal L$. The operator $\mathcal L$ acts on $C^2$ functions $h: \overline{\mathcal{D}} \to \mathbb{R}$ as
\begin{align*}
\mathcal L h(\mathbf{x,y}) &= \sum_{i} x_i \left(\{\mathbf{Ay}\}_i - \mathbf{x^{\top} A y}\right) \partial_{x_i} h(\mathbf{x,y}) \\
&+ \sum_{i} y_i \left(\{\mathbf{Bx}\}_i - \mathbf{y^{\top} B x}\right) \partial_{y_i} h(\mathbf{x,y}) \\
 &+ \frac{1}{2}\sum_{i,j} D_{ij}(\mathbf{x}) \partial_{x_i x_j} h(\mathbf{x,y}) \\
 &+ \frac{1}{2}\sum_{i,j} \tilde D_{ij}(\mathbf{y}) \partial_{y_i y_j} h(\mathbf{x,y}), 
\end{align*}
where the diffusion matrices are given as
\begin{align*}
D_{ij}(\mathbf{x}) &= \sum_{k=1}^n x_i x_j R_{ik}(\mathbf{x})R_{jk}(\mathbf{x})\,, \\
\tilde D_{ij}(\mathbf{y}) &= \sum_{k=1}^m y_i y_j  S_{ik}(\mathbf{y})  S_{jk}(\mathbf{y})\,.
\end{align*}
We observe that
\begin{align*}
\partial_{x_i} V(\mathbf{x,y}) &= -\frac{p_i}{x_i}, \ \partial_{y_i} V(\mathbf{x,y}) = -\frac{q_i}{y_i}, \\
\partial_{x_j x_i} V(\mathbf{x,y}) &= \delta_{i,j} \frac{-p_i}{x_i^2}, \ \partial_{y_j y_i} V(\mathbf{x,y}) = \delta_{i,j} \frac{-q_i}{y_i^2}.
\end{align*}
Therefore, applying $\mathcal L$ to $V(\mathbf{x,y})$, we obtain 
\begin{align} \label{eq:H_bKolm_gen}
&\mathcal L V(\mathbf{x,y}) = -\sum_{i} p_i \left(\{\mathbf{Ay}\}_i - \mathbf{x^{\top} A y}\right) \nonumber\\
&- \sum_{i} q_i \left(\{\mathbf{Bx}\}_i - \mathbf{y^{\top} B x}\right) + \sum_{i}  p_i \left( \sum_{k =1}^n R_{ik}^2(\mathbf{x}) \right) \nonumber\\
 &+ \sum_{i}  q_i \left( \sum_{k=1}^n S_{ik}^2(\mathbf{y}) \right).
\end{align}
In the more specific situation of model~\eqref{eq:stoc2}, the diffusion coefficients can be written as
\begin{align*}
D_{ij}(\mathbf{x}) &= \sum_{k=1}^n G_{ik}(\mathbf{x})G_{jk}(\mathbf{x})\,, \\
\tilde D_{ij}(\mathbf{y}) &= \sum_{k=1}^m \tilde G_{ik}(\mathbf{y}) \tilde G_{jk}(\mathbf{y})\,,
\end{align*}
for, as we can see directly from equation~\eqref{eq:stoc2_matrix},
\begin{align*}
G(\mathbf{x}) &= \left[ \diag (x_1, \dots,x_n)  - \mathbf{x x^{\top}}\right] \diag(\sigma_1, \dots, \sigma_n), \\
\tilde G (\mathbf{y}) &= \left[ \diag (y_1, \dots, y_m)  - \mathbf{y y^{\top}}\right] \diag(\eta_1, \dots, \eta_m).
\end{align*}
In particular, we can write the explicit coefficients
\begin{align*}
G_{ik}(\mathbf{x}) &= \begin{cases}
\sigma_k x_i x_k & \text{ if } i\neq k,\\
\sigma_i x_i(1-x_i) & \text{ if } i = k,
\end{cases} \\
\tilde G_{ik}(\mathbf{y}) &= \begin{cases}
\eta_k y_i y_k & \text{ if } i\neq k,\\
\eta_i y_i(1-y_i) & \text{ if } i = k.
\end{cases}
\end{align*}
Hence, in this context, equation~\eqref{eq:H_bKolm_gen} can be written as
\begin{align} \label{eq:H_bKolm}
&\mathcal L V(\mathbf{x,y}) = -\sum_{i} p_i \left(\{\mathbf{Ay}\}_i - \mathbf{x^{\top} A y}\right) - \sum_{i} q_i \big(\{\mathbf{Bx}\}_i \nonumber\\
&- \mathbf{y^{\top} B x}\big) 
 + \sum_{i}  (p_i)\left( \sum_{k \neq i} \frac{\sigma_k^2}{2}x_k^2 + \frac{\sigma_i^2}{2}(1-x_i)^2\right)\nonumber\\
 &+ \sum_{i}  (q_i)\left( \sum_{k \neq i} \frac{\eta_k^2}{2}y_k^2 + \frac{\eta_i^2}{2}(1-y_i)^2\right).
\end{align}
%

For detailed computations 
see Appendix~\ref{App:second_derivatives}.

One observes from the proof of Theorem~\ref{thm:constant_of_motion} (see Appendix~\ref{app:determinstic_system}) that, for any zero-sum game with payoff matrices $\mathbf A, \mathbf B$ and  Nash equilibrium $(\mathbf p,\mathbf q)$, we have for any strategy profile $(\mathbf x, \mathbf y)$ that
\begin{equation}\label{eq:utilities_inequ}
\sum_{i} p_i \left(\{\mathbf{Ay}\}_i - \mathbf{x^{\top} A y}\right) + \sum_{j} q_j \left(\{\mathbf{Bx}\}_j - \mathbf{y^{\top} B x}\right) \geq 0 
\end{equation}   
Note that the diffusive part in \eqref{eq:H_bKolm} is only contributing positively which will push trajectories outside in this case, as opposed to, e.g., model \eqref{eq:Imhof_model}, whose derivation involves an It\^{o} correction that leads to an additional term acting as dissipation towards the interior.
If the Nash equilibrium $(\mathbf p,\mathbf q)$ is interior, inequality~\eqref{eq:utilities_inequ} becomes an equality.
In the case where no interior Nash equilibrium exists, an even stronger statement is possible: there is an equilibrium of maximal support, which is the barycenter of the equilibrium polytope, such that for any interior strategy vector $(x,y)$ \cite{GeorgiosSODA18} 
\begin{equation}
\label{eq:equil}
    \sum_{i} p_i \left(\{\mathbf{Ay}\}_i - \mathbf{x^{\top} A y}\right) + \sum_{j} q_j \left(\{\mathbf{Bx}\}_j - \mathbf{y^{\top} B x}\right) > 0.
    \end{equation}
Formula~\eqref{eq:utilities_inequ} actually generalizes to network extensions of zero-sum games with arbitrary many agents \cite{GeorgiosSODA18, piliouras2014optimization}. 
Analogously, we obtain for a non-interior anti-equilibrium $(\mathbf p^*, \mathbf q^*)$
\begin{equation} \label{eq:antiequ}
 \sum_{i} p^*_i \left(\{\mathbf{Ay}\}_i - \mathbf{x^{\top} A y}\right) + \sum_{j} q^*_j \left(\{\mathbf{Bx}\}_j - \mathbf{y^{\top} B x}\right)  <0.
\end{equation}

\subsection{Locating invariant measures of the Markov process}
\label{sec:invariant_measure}
Recall that we consider the stochastic model~\eqref{eq:stoc2_general}, and in particular model~\eqref{eq:stoc2}, on the domain
$\mathcal{D} := \Delta_n \times \Delta_m $, where
$$ \Delta_n := \{ \mathbf{x} \in (0,1)^n \,:\, x_1 + \dots + x_n =1  \}$$
and $\Delta_m$ is defined analogously. 
We aim to give a characterization of the dynamics by finding invariant measures for the Markov process solving the SDE~\eqref{eq:stoc2} and, in particular, the invariant measures whose supports attract trajectories in the sense of Section 4.1.
A first useful observation concerns the invariance under the stochastic dynamics of the interior and the boundary respectively:
\begin{lem} \label{lem:invariantsets}
The sets $\mathcal{D}$ and $\partial \mathcal{D}$ respectively are invariant under the process $Z_t =(X_t, Y_t)$ solving equation~\eqref{eq:stoc2}.
In particular, any invariant probability measure $\mu$ for $Z_t$ on $\overline{\mathcal D}= \mathcal{D} \cup \partial \mathcal{D}$ can be written as a sum 
$$\mu = \alpha \mu_{\mathcal{D}} + (1- \alpha)\mu_{\partial \mathcal{D}}, \ \alpha \in [0,1], $$ 
where $\mu_{\mathcal{D}}$ and $\mu_{\partial \mathcal{D}}$ are invariant probability measures on $\mathcal{D}$ and $\partial \mathcal{D}$ respectively.
\end{lem}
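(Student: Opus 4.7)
The argument splits into two parts: (a) pathwise forward-invariance of both $\mathcal D$ and $\partial \mathcal D$ under the SDE \eqref{eq:stoc2_general}, and (b) a measure-theoretic splitting of $\mu$ that follows from (a). Part (a) carries the main technical content; given it, (b) is routine.

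For the boundary, each face $F_i^X:=\{X_i=0\}\cap \overline{\mathcal D}$ (and analogously $F_j^Y:=\{Y_j=0\}\cap\overline{\mathcal D}$) is invariant because the drift and diffusion coefficients of $X_i$ in \eqref{eq:stoc2_general} both carry a prefactor $X_i$; strong uniqueness of the SDE then forces any trajectory starting with $X_i(0)=0$ to satisfy $X_i(t)\equiv 0$. Since $\partial \mathcal D$ is the finite union of such faces, it is invariant. For invariance of the interior, I would apply It\^{o}'s formula to $\log X_i$. Using the factorisation $D_{ii}(\mathbf x)=x_i^2 \sum_k R_{ik}^2(\mathbf x)$ from Section~4.2, the It\^{o} correction simplifies to $-\tfrac{1}{2}\sum_k R_{ik}^2(\mathbf X)$, so that
\begin{align*}
\rmd\log X_i &= \Big[\{\mathbf{AY}\}_i - \mathbf{X}^\top\mathbf{AY} - \tfrac{1}{2}\sum_k R_{ik}^2(\mathbf X)\Big]\rmd t \\
&\quad + \big(R(\mathbf X)\,\rmd W(t)\big)_i
\end{align*}
has drift and diffusion coefficients that are continuous, hence bounded, on the compact set $\overline{\mathcal D}$. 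A standard localisation with stopping times $\tau_n:=\inf\{t\geq 0:X_i(t)\leq 1/n\}$, combined with It\^{o}'s isometry on the martingale part and Gr\"onwall's lemma on the drift, bounds $\mathbb E[(\log X_i(t\wedge\tau_n))^2]$ uniformly in $n$ for each finite $t$; this forces $\tau_n\to\infty$ a.s., and hence $X_i(t)>0$ for all $t\geq 0$ almost surely. Applying the argument to each $X_i$ and $Y_j$ shows that $\mathcal D$ itself is invariant.

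Given (a), set $\alpha:=\mu(\mathcal D)$; the cases $\alpha\in\{0,1\}$ are immediate, so assume $\alpha\in(0,1)$ and define the normalised restrictions $\mu_{\mathcal D}:=\alpha^{-1}\mu(\cdot\cap\mathcal D)$ and $\mu_{\partial\mathcal D}:=(1-\alpha)^{-1}\mu(\cdot\cap\partial\mathcal D)$, so that tautologically $\mu=\alpha\mu_{\mathcal D}+(1-\alpha)\mu_{\partial\mathcal D}$. By (a), $\hat P_t(z,\mathcal D)=\mathds 1_{\mathcal D}(z)$ for every $z\in\overline{\mathcal D}$, so for any Borel $A\subset\mathcal D$,
\begin{align*}
(P_t^*\mu_{\mathcal D})(A) &= \alpha^{-1}\!\!\int_{\mathcal D}\!\hat P_t(z,A)\,\rmd\mu(z) \\
&= \alpha^{-1}\!\!\int_{\overline{\mathcal D}}\!\hat P_t(z,A)\,\rmd\mu(z) \\
&= \alpha^{-1}(P_t^*\mu)(A) = \alpha^{-1}\mu(A) = \mu_{\mathcal D}(A),
\end{align*}
using $P_t^*\mu=\mu$ in the penultimate step; the analogous identity holds for $\mu_{\partial\mathcal D}$ on Borel subsets of $\partial\mathcal D$. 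The main obstacle throughout is interior invariance: the decisive structural input is the factorisation $D_{ii}(\mathbf x)=x_i^2\sum_k R_{ik}^2(\mathbf x)$ (i.e.\ the multiplicative form of the noise and the orthogonality condition \eqref{eq:matrix_property}), which is precisely what keeps the It\^{o} correction bounded on $\overline{\mathcal D}$ and prevents $\log X_i$ from escaping to $-\infty$ in finite time.
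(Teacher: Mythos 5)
Your proposal is correct and follows the same route as the paper, whose own proof is the one-line assertion that the lemma ``follows directly from the form of equation~\eqref{eq:stoc2_general} and condition~\eqref{eq:matrix_property}'': invariance of each face $\{X_i=0\}$ from the multiplicative prefactor plus strong uniqueness, and the routine restriction-and-normalisation of $\mu$ using $\hat P_t(z,\mathcal D)=\mathds 1_{\mathcal D}(z)$. The one place where you add genuine content is the interior-invariance step --- the It\^{o} computation for $\log X_i$ with the stopping times $\tau_n$ showing non-attainability of the boundary from $\mathcal D$ --- which the paper simply declares ``immediate from the prefactors''; your localisation argument is exactly the justification that claim tacitly relies on, and it is carried out correctly.
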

\begin{proof}
Follows directly from the form of equation~\eqref{eq:stoc2_general} and condition~\eqref{eq:matrix_property}.
\end{proof}
The following proof of our main theorem uses an abstract persistence result \cite[Theorem 5.4]{BenaimStrickler} that fits the setting of a Feller semigroup $(P_t)_{t \geq 0}$ as introduced in Section~\ref{sec:Markov_processes} for the SDE~\eqref{eq:stoc2_general}. For that, we choose $\partial \mathcal{D} = \mathcal X_0$ and $\mathcal{D} = \mathcal X_+$ in the context of \cite[Section 5]{BenaimStrickler}.
We note that \cite[Hypothesis 5.1]{BenaimStrickler}, which states the invariance of $\mathcal X_0$ ($\partial \mathcal{D}$ for our case) under $P_t$, is obviously satisfied for the Feller semigroup $(P_t)_{t\geq0}$ induced by the SDE solution $(X_t)_{t \geq 0}$. 
Furthermore, we can summarize \cite[Hypothesis 5.2]{BenaimStrickler} and \cite[Theorem 5.4]{BenaimStrickler} into the following statement, introducing for $f$ and $f^2$ lying in the domain of the generator $\mathcal L$ the operator
$\Gamma(f) = \mathcal L f^2 - 2 f \mathcal L f$ and recalling the definitions of $H$-(non)persistence and accessibility from the end of Section 4.1:
\begin{thm}[\cite{BenaimStrickler}] \label{thm:prep}
Assume that the there are continuous maps $V: \mathcal{D} \to \R^+$ and $H:\overline{\mathcal D} \to \R$ with the following properties:
\begin{enumerate}[(a)]
    \item For all compact $K \subset \mathcal D$, there exists $V_K$ with $V|_K = V_K|_K$ and $(\mathcal L V_K)|_K= H|_K$;
    \item $ \sup_{\{K\subset \mathcal D : K \text{ compact}\}} \|\Gamma(V_K)|_K\| < \infty$;
    \item $\lim_{x \to \partial \mathcal D} V(x) = \infty$;
    \item There is a $\kappa > 0$ such that $\left| V(X_t) - V(X_{t-})\right| \leq \kappa$, i.e.~jumps of $V(X_t)$ are bounded;
\end{enumerate}
Then, if the process is $H$-nonpersistent and $\partial \mathcal D$ is accessible from $\overline{\mathcal D}$, we have
\begin{equation*}
\mathbb{P}_x \left( \liminf_{t \to \infty} \frac{V(X_t)}{t} \geq - \Lambda^+(H) \right) = 1,
\end{equation*}
for all $X_0 = x \in \overline{\mathcal D}$.
\end{thm}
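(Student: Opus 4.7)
The plan is to apply the general persistence machinery of Benaim-Strickler: combine a Dynkin expansion of $V$ along the trajectory with a Krylov-Bogolyubov argument on the empirical occupation measures, and then exploit $H$-nonpersistence to pin down the sign of the ergodic averages of $H$. The main obstacle is that $V$ blows up on $\partial \mathcal D$, so the martingale identity must be obtained by localisation on compacts of $\mathcal D$ and then passed to the limit in a controlled way --- which is precisely why hypotheses (b) and (d) are imposed.

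First I would exhaust $\mathcal D$ by compact sets $K_\ell \uparrow \mathcal D$ with exit times $\tau_\ell = \inf\{t \geq 0 : X_t \notin K_\ell\}$. Hypothesis (a) supplies surrogates $V_{K_\ell}$ in the domain of $\mathcal L$ with $V_{K_\ell}|_{K_\ell} = V|_{K_\ell}$ and $(\mathcal L V_{K_\ell})|_{K_\ell} = H|_{K_\ell}$, so that Dynkin's formula stopped at $t \wedge \tau_\ell$ gives
\begin{equation*}
V(X_{t\wedge \tau_\ell}) - V(X_0) = \int_0^{t\wedge \tau_\ell} H(X_s)\,\rmd s + M_t^{(\ell)},
\end{equation*}
where $M_t^{(\ell)}$ is a local martingale whose quadratic variation is pointwise controlled by $\Gamma(V_{K_\ell})|_{K_\ell}$. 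The uniform bound (b), together with the bounded-jump condition (d) and a strong law of large numbers for local martingales with at most linearly growing quadratic variation, yields $M_t^{(\ell)}/t \to 0$ almost surely and, after sending $\ell \to \infty$, shows that the martingale remainder in the full Dynkin identity is $o(t)$.

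Next, the occupation measures $\Pi_t := t^{-1}\int_0^t \delta_{X_s}\,\rmd s$ are tight on the compact space $\overline{\mathcal D}$, and any subsequential weak limit $\mu_\infty$ is invariant for $(P_t)_{t \geq 0}$ by Krylov-Bogolyubov. Lemma~\ref{lem:invariantsets} decomposes $\mu_\infty = \alpha \mu_{\mathcal D} + (1-\alpha)\mu_{\partial \mathcal D}$; in the regime where $V(X_t)/t$ stays positive, $\Pi_t$ must escape every compact of $\mathcal D$ (else $V(X_t)$ would remain bounded), forcing $\alpha = 0$ and concentrating $\mu_\infty$ on $\partial \mathcal D$. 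Ergodic decomposition then gives $\mu_\infty(H) \geq \inf_{\mu \in \mathcal P_{\textnormal{erg}}(\partial \mathcal D)} \mu(H) = -\Lambda^+(H)$, which is strictly positive by $H$-nonpersistence.

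Dividing the full Dynkin identity by $t$, using accessibility of $\partial \mathcal D$ from $\overline{\mathcal D}$ to guarantee that the boundary invariant measures are actually felt by typical trajectories, and taking $\liminf_{t\to \infty}$ gives
\begin{equation*}
\liminf_{t \to \infty} \frac{V(X_t)}{t} \;\geq\; \liminf_{t \to \infty} \frac{1}{t}\int_0^t H(X_s)\,\rmd s \;\geq\; -\Lambda^+(H),
\end{equation*}
$\mathbb P_x$-almost surely for every $x \in \overline{\mathcal D}$. The hardest step is the interchange of the localisation limit $\ell \to \infty$ with the time limit $t \to \infty$: one must rule out trajectories along which $V(X_t)$ oscillates between moderate and very large values fast enough to spoil the time average, and this is exactly where the quantitative bounds (b) on $\Gamma(V_K)$ and (d) on jumps of $V$ become indispensable.
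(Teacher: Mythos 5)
First, a point of reference: the paper does not prove this statement at all --- it is explicitly imported from Bena\"im--Strickler (``we can summarize [Hypothesis 5.2] and [Theorem 5.4] into the following statement''), so there is no internal proof to compare against and your attempt must stand on its own. The first half of your sketch is the right toolbox and is essentially sound: exhausting $\mathcal D$ by compacts, applying Dynkin's formula to the surrogates $V_K$ from (a), controlling the quadratic variation of the local martingale by $\Gamma(V_K)$ via (b), and invoking a strong law for martingales (with (d) handling jumps) to get $M_t/t \to 0$ and hence $\liminf_t V(X_t)/t = \liminf_t \Pi_t(H)$ is exactly how such results begin.

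The decisive step, however, is circular. You argue that the weak limit points $\mu_\infty$ of the occupation measures put no mass on $\mathcal D$ ``in the regime where $V(X_t)/t$ stays positive'' --- but that regime \emph{is} the conclusion $\liminf_t V(X_t)/t \geq -\Lambda^+(H) > 0$ you are trying to establish, so nothing has been proved. The parenthetical justification ``else $V(X_t)$ would remain bounded'' is also a non sequitur: tightness of $\Pi_t$ inside a compact subset of $\mathcal D$ constrains time averages, not the terminal value $V(X_t)$, and conversely a bounded $V(X_t)$ along a subsequence is compatible with $\Pi_t$ escaping every compact. What is actually missing is the key lemma of the Bena\"im--Strickler machinery: every ergodic invariant measure $\nu$ with $\nu(\mathcal D)=1$ satisfies $\nu(H)=0$ (this is where hypotheses (a)--(d) genuinely enter, since $V$ need not be $\nu$-integrable), combined with an argument --- in the cited work carried out via an exponential transform of the form $e^{-\theta V}$ with $\theta$ small, exploiting $\Gamma(V_K)$ bounded, rather than via your occupation-measure dichotomy --- that excludes interior mass in the limit and yields the quantitative rate $-\Lambda^+(H)$ instead of merely $\liminf_t \Pi_t(H) \geq 0$. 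Note also that even granting $\mu_\infty(\partial\mathcal D)=1$, writing $\mu_\infty(H) \geq \inf_{\mu \in \mathcal P_{\textnormal{erg}}(\partial\mathcal D)}\mu(H)$ requires that the ergodic decomposition of $\mu_\infty$ involve only boundary ergodic measures, which is precisely the point at issue; and your use of accessibility is purely decorative, whereas in the quoted theorem it is a genuine hypothesis. As it stands, the proposal reproduces the correct scaffolding but assumes the theorem's conclusion at the one place where the real work is done.
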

In the following we will make one more ellipticity assumption on $R$ and $S$ which is, e.g., directly satisfied by model~\eqref{eq:stoc2_matrix}:
\begin{align} \label{eq:matrix_cond}
\exists \xi > 0 \text{ such that } \forall i\neq j:\, &\sum_{k=1}^n R_{ik}^2(x) + \sum_{k=1}^n R_{jk}^2(x) \geq \xi, \nonumber\\
 &\sum_{k=1}^n R_{ik}^2(x) = 0 \text{ iff } x_i =1.
\end{align}
We are now prepared to show our main result:


\begin{thm}[Zero-sum game with noise] \label{thm:general}
Consider model~\eqref{eq:stoc2_general} with the assumptions as above, in particular the matrices $R$ and $S$ satisfying~\eqref{eq:matrix_cond}. Then
\begin{enumerate}[(a)]
\item any invariant probability measure $\mu$ on $\overline{\mathcal{D}}$ is 
\begin{enumerate}[(i)]
\item supported on the boundary $ \partial \mathcal D$,
\item given by a convex combination of the ergodic Dirac measures
$\delta_{v_{i,j}}$, $(i,j) \in \{1, \dots ,n\} \times \{1, \dots ,m\} $, supported on the corners $v_{i,j}$ of $\partial \mathcal D$.
\end{enumerate}
\item If the Nash equilibrium $(\mathbf p, \mathbf q)$ is interior, all $\delta_{v_{i,j}}$ are attracting with respect to $\mathcal D$.
\item If there is no interior Nash equilibrium but only a Nash equilibrium $(\mathbf p, \mathbf q)$ with maximal support, then
\begin{enumerate}[(i)]
\item for sufficiently large noise, i.e.~$R(x)$ and $S(y)$ with sufficiently large entries,
all $\delta_{v_{i,j}}$ are attracting with respect to $\mathcal D$.
\item  otherwise, for sufficiently small noise, i.e.~$R(x)$ and $S(y)$ with sufficiently small entries,
the only invariant measures which are attracting with respect to $\mathcal D$ are contained in the subset $\Delta_{\partial, 1}$ of $ \partial \mathcal D$ which contains the Nash equilibrium of maximal support.
\end{enumerate}
\item In particular, for model~\eqref{eq:stoc2}, a large noise condition in the sense of (c)(i) reads
\begin{align} \label{eq:cond_large_noise}
&\min_{i \in I,j \in J} \left\{ p_i  \min_{k} \left\{\frac{\sigma_k^2}{2n}\right\} + q_j  \min_{k} \left\{\frac{\eta_k^2}{2m}\right\} \right\} \nonumber \\
&> \sum_{i \in I} p_i\{\mathbf{ A q}\}_i + \max_{i \in I^c} \sum_{j \in J} q_j  B_{j,i}  + \max_{j \in J^c} \sum_{i \in I} p_i  A_{i,j},
\end{align}
and a small noise condition in the sense of (c)(ii) can be written as
\begin{align} \label{eq:cond_small_noise}
 \sum_{i \in I} p_i\{\mathbf{ A q}\}_i +  \min_{i \in I^c, \, j \in J^c } &\left\{ \sum_{k \in J} q_k B_{k,i}, \,  \sum_{l \in I} p_l  A_{l,j} \right\} \nonumber\\
&>\max_{i,j} \left\{ n \frac{\sigma_i^2}{2}+ m \frac{\eta_j^2}{2} \right\}.
\end{align}
%
\end{enumerate} 
\end{thm}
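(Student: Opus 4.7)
The plan is to invoke the Ben\"aim--Strickler abstract nonpersistence criterion (Theorem~\ref{thm:prep}) with the cross-entropy Lyapunov function~\eqref{eq:V} as the scaffold, together with an inductive stratification of $\partial \mathcal D$ by its faces. The algebraic engine is Equation~\eqref{eq:H_bKolm_gen}, which decomposes $\mathcal L V$ into a drift part whose sign is controlled by~\eqref{eq:utilities_inequ} and~\eqref{eq:antiequ}, and a diffusion part that is strictly positive under the ellipticity assumption~\eqref{eq:matrix_cond}.

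For part~(a)(i), I would take the reference profile in $V$ to be the interior Nash $(\mathbf p,\mathbf q)$ whenever one exists, and the anti-equilibrium $(\mathbf p^*,\mathbf q^*)$ of maximal support otherwise. In the interior Nash case, Theorem~\ref{thm:constant_of_motion} forces the drift part of $\mathcal L V$ to vanish identically, leaving $\mathcal L V$ equal to a strictly positive diffusive quantity on all of $\mathcal D$; Theorem~\ref{thm:prep} then yields $\liminf_{t\to\infty} V(X_t)/t>0$ almost surely, ruling out any invariant probability measure with mass on the interior. In the non-interior case,~\eqref{eq:antiequ} makes the drift of $\mathcal L V^*$ strictly nonnegative, so $\mathcal L V^*>0$ throughout $\mathcal D$; combining $V$ and $V^*$ and invoking Lemma~\ref{lem:supports} so that their joint blow-up set exhausts the relevant portion of $\partial \mathcal D$ closes the argument. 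For~(a)(ii), I would run the same argument inductively on the dimension of the faces: each face $F\subset \partial \mathcal D$ is invariant under the SDE, the restricted dynamics is of the form~\eqref{eq:stoc2_general} still satisfying~\eqref{eq:matrix_cond}, and the subgame on $F$ is again zero-sum, so the same step pushes any invariant measure on $F$ into $\partial F$. The induction terminates at the $0$-dimensional corners $v_{i,j}$, forcing every ergodic invariant measure to be a Dirac $\delta_{v_{i,j}}$.

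For parts~(b) and~(c), the $H$-exponent at a corner reads $\Lambda_{\delta_{v_{i,j}}}(H)=-\mathcal L V(v_{i,j})$, with $V$ built from the Nash of maximal support. A direct evaluation of~\eqref{eq:H_bKolm_gen} at the corner $(e_i,e_j)$ shows that the drift contribution collapses to the scalar $(\mathbf A\mathbf q)_i-(\mathbf p^\top \mathbf A)_j$, which vanishes at every corner inside $\Delta_{\partial,1}$ by the equality case of the Nash inequalities and is strictly negative at corners outside $\Delta_{\partial,1}$ by the quasi-strictness of the max-support Nash in generic zero-sum games~\cite{GeorgiosSODA18}; the diffusion contribution at the same corner is a strictly positive combination of $R_{ik}^2$ and $S_{jk}^2$ secured by~\eqref{eq:matrix_cond}. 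This yields~(b) at once: when the Nash is interior, all corners lie in $\Delta_{\partial,1}$ and $\mathcal L V(v_{i,j})>0$, so every $\delta_{v_{i,j}}$ is attracting. For~(c), the sign of $\mathcal L V(v_{i,j})$ is determined by the competition between negative drift and positive diffusion at each corner: large noise as in~(c)(i) makes the diffusion dominate at every corner, so all Diracs are attracting, while small noise as in~(c)(ii) lets the drift dominate at corners outside $\Delta_{\partial,1}$, rendering those Diracs repelling and leaving only those inside $\Delta_{\partial,1}$ as the attracting invariant measures. Part~(d) follows by substituting the explicit diffusion coefficients of~\eqref{eq:stoc2_matrix} into these sign conditions and applying crude bounds such as $x_k,y_k\le 1$ and $\sum_k p_k,\sum_k q_k\le 1$; accessibility of each corner from $\mathcal D$ follows from a standard control argument using the hypoellipticity of the SDE.

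The hardest step I anticipate is the inductive argument on faces when the subgame restricted to a face is non-generic in the sense of Lemma~\ref{lem:supports}: the drift of the face's $\mathcal L V$ may fail to be strictly negative off the sub-Nash support, forcing either a perturbation argument within the open and dense set of generic zero-sum subgames, or a careful combination of several Lyapunov functions to exhaust the face. A secondary technicality is the full verification of hypotheses~(a)--(d) of Theorem~\ref{thm:prep} (the construction of localized $V_K$, the bound on $\Gamma(V_K)$, and control of the jumps of $V(X_t)$), but since $V$ is smooth on $\mathcal D$ and the SDE~\eqref{eq:stoc2_general} has continuous paths these checks should be routine.
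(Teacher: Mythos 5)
Your overall strategy---the Bena\"im--Strickler criterion of Theorem~\ref{thm:prep} applied to cross-entropy Lyapunov functions, sign control of $\mathcal L V$ via \eqref{eq:utilities_inequ}, \eqref{eq:antiequ} and the ellipticity condition \eqref{eq:matrix_cond}, and an induction over the faces of $\partial\mathcal D$ for (a)(ii)---is the same as the paper's, and your corner computation reducing the drift to $(\mathbf A\mathbf q)_i-(\mathbf p^{\top}\mathbf A)_j$ is correct and consistent with how the paper obtains (b) and the sign dichotomy in (c). However, there is a genuine gap in your treatment of the non-interior case, precisely in the small-noise regime of (c)(ii), which is the part the paper itself flags as the crucial difficulty. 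Your plan is to ``combine $V$ and $V^*$'' so that the joint blow-up set covers $\partial\mathcal D$ and then invoke Theorem~\ref{thm:prep}. This cannot work as stated: for small noise the generator applied to the Nash cross-entropy, $H_0=L_0+K_0$, is \emph{strictly negative} on $\Delta_{\partial,2}$ (the drift $-L_0$ dominates the diffusion $K_0$ there---this is exactly condition \eqref{eq:cond_small_noise}), while the anti-equilibrium function $V^*=V_1$ does not blow up on all of $\partial\mathcal D$ (only where coordinates in $\supp(\mathbf p^*)\cup\supp(\mathbf q^*)$ vanish). Hence no convex combination $\alpha V_0+\beta V_1$ simultaneously satisfies hypothesis (c) of Theorem~\ref{thm:prep} on all of $\partial\mathcal D$ and has $\mathcal L(\alpha V_0+\beta V_1)>0$ there, so the nonpersistence theorem is not applicable with a single combined function. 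You also omit the third function $V_2$ built from the strategies outside \emph{both} supports ($\tilde I$, $\tilde J$), without which even the large-noise combination fails to diverge on the whole boundary.

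The paper closes this gap with two devices you do not have. First, in the large-noise case it uses the three-fold combination $V_{\alpha,\beta,\gamma}=\alpha V_0+\beta V_1+\gamma V_2$ with $\gamma$ small, so that the blow-up sets exhaust $\partial\mathcal D$ while positivity of the generator survives. Second, and more importantly, in the small-noise case it abandons the single-Lyapunov-function route altogether: it aggregates coordinates via $X_1=\sum_{i\in I}x_i$, $Y_1=\sum_{j\in J}y_j$, partitions $\partial\mathcal D$ into the four regions $A_{0,0},A_{0,1},A_{1,0},A_{1,1}$, and plays the two functions $V_0$ and $V_1$ against each other: $H_1>0$ on $A_{1,1}=\Delta_{\partial,1}$ and $H_1\ge0$ on $A_{1,0},A_{0,1}$, while $H_0<0$ on $A_{0,0}=\Delta_{\partial,2}$ and on $A_{1,0},A_{0,1}$, so that only $\Delta_{\partial,1}$ is fully attracting and the interior flow must accumulate there. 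Your proposal correctly anticipates that ``a careful combination of several Lyapunov functions'' may be needed, but it does not supply this region-by-region argument, and without it both (a)(i) and (c)(ii) remain unproved in the small-noise, non-interior regime.
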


 In the following we will prove our statements by a case distinction in terms of having an interior Nash equilibrium or not. The latter case will require an additional case distinction depending on a comparison of the noise and deterministic terms.

\begin{proof}
As before, consider the cross entropy function
\begin{equation} \label{tildeH}
V(\mathbf{x, y}) := \sum_i (-p_i)\ln x_i + \sum_j (-q_i)\ln y_j.
\end{equation}
Then $V$ is smooth and positive on the domain 
$\mathcal{D} := \Delta_n \times \Delta_m $.

\textbf{1.} Firstly, assume that the Nash equilibrium $(\mathbf p,\mathbf q)$ is interior.
 Similarly to~\eqref{eq:H_bKolm}, we observe for $V$, given by equation~\eqref{tildeH}, that
\begin{align} \label{eq:LV}
H(\mathbf{x,y}) &:= \mathcal L V(\mathbf{x,y}) = \sum_{i} (-p_i) \left(\{\mathbf{Ay}\}_i - \mathbf{x^{\top} A y}\right) \nonumber\\
&+ \sum_{i} (-q_i) \left(\{\mathbf{Bx}\}_i - \mathbf{y^{\top} B x}\right) \nonumber\\
 &+ \sum_{i}  p_i \left( \sum_{k =1}^n R_{ik}^2(\mathbf{x}) \right)
 + \sum_{i}  q_i \left( \sum_{k=1}^n S_{ik}^2(\mathbf{y}) \right). \nonumber\\
&= \sum_{i}  p_i \left( \sum_{k =1}^n R_{ik}^2(\mathbf{x}) \right)
 + \sum_{i}  q_i \left( \sum_{k=1}^n S_{ik}^2(\mathbf{y}) \right),
\end{align}
where the last equality follows from the considerations around inequality~\eqref{eq:utilities_inequ}, since the Nash equilibrium is interior. The fact that $V$ satisfies assumptions (a) and (d) in Theorem~\ref{thm:prep} are obvious and assumption (b) follows from a straight-forward calculation. Property (c) can directly be observed as
\begin{equation} \label{eq:limV}
\lim_{z \to \partial \mathcal{D}} V(z) \to \infty.
\end{equation}
Using condition~\eqref{eq:matrix_cond}, we observe directly from~\eqref{eq:LV} that $H= \mathcal L V(\mathbf{x,y}) > 0$ on $\mathcal{D} \cup \partial \mathcal{D}$. 
Hence, $\Lambda^+(H) < 0$ such that the process is $H$-nonpersistent, and, in particular, claim (b) of our Theorem follows. 
As can be seen immediately from our ellipticity condition~\eqref{eq:matrix_cond}, $\partial \mathcal D$ is accessible from $\overline{\mathcal D}$ such that we can deduce with Theorem~\ref{thm:prep} that
\begin{equation*}
\mathbb{P}_x \left( \liminf_{t \to \infty} \frac{V(X_t)}{t} \geq - \Lambda^+(H) \right) = 1,
\end{equation*}
for all $X_0 = x \in \overline{\mathcal{D}}$.
In other words, the boundary $\partial \mathcal{D}$ absorbs almost all trajectories, implying $\alpha=0$ in Lemma~\ref{lem:invariantsets} and thereby the claim (a)(i) for this case.

 Additionally note that for model~\eqref{eq:stoc2}, the remaining term in~\eqref{eq:LV} reads
\begin{align*}
\sum_{i}  (p_i)\left( \sum_{k \neq i} \frac{\sigma_k^2}{2}x_k^2 + \frac{\sigma_i^2}{2}(1-x_i)^2\right)\nonumber\\
 + \sum_{i}  (q_i)\left( \sum_{k \neq i} \frac{\eta_k^2}{2}y_k^2 + \frac{\eta_i^2}{2}(1-y_i)^2\right),
\end{align*} 
such that we have on $\partial \mathcal D$  the uniform lower bound
\begin{align} \label{eq:lowerbound}
H \geq\min_{i,j} \bigg\{ p_i \left( \min_{k \neq i} \left\{\frac{\sigma_k^2}{2n}\right\}+ \frac{\sigma_i^2}{2} \right), \nonumber\\
q_j \left( \min_{k \neq j} \left\{\frac{\eta_k^2}{2m}\right\}+\frac{\eta_j^2 }{2} \right) \bigg\} > 0.
\end{align} 
 We will come back to this bound later in the proof.

\textbf{2.} We now consider the situation that there is no interior Nash equilibrium. Then there is a non-interior equilibrium $(\mathbf{p}, \mathbf{q})$ with maximal support, whose index sets we denote
by $I$ and $J$; hence, the equilibrium is interior with respect to the simplex of this support.

 Recall the sets $\Delta_{\partial, 1}$~\eqref{eq:Delta1} and  $\Delta_{\partial, 2}$~\eqref{eq:Delta2}.
 We now introduce new Lyapunov-type functions, taking into account the different subsets of the boundary, $\Delta_{\partial, 1}$ and $\Delta_{\partial, 2}$, accordingly.
Firstly, we take, similarly to before, the Lyapunov function
\begin{equation} \label{V0}
V_0(\mathbf{x, y}) := \sum_{i \in I} (-p_i)\ln x_i + \sum_{j \in J} (-q_j)\ln y_j,
\end{equation}
such that $V_0(\mathbf{x, y}) \to \infty$ when $(\mathbf{x, y}) \to \Delta_{\partial, 2}$ and $V_0(\mathbf{x, y}) < \infty$ when $(\mathbf{x, y}) \in \interior (\Delta_{\partial, 1})$.
We define for any vector $(\mathbf{x, y})  \in \overline{\mathcal D}$
\begin{align*}
L_0(\mathbf{x, y})  &:= \sum_{i \in I} (-p_i) \left(\{\mathbf{ A y}\}_i - \mathbf{x^{\top}  A y}\right)\\
&+ \sum_{j \in J} (-q_j) \left(\{\mathbf{ B x}\}_j - \mathbf{y^{\top}  B x}\right) \\
&= - \left(\sum_{i \in I} p_i\{\mathbf{ A y}\}_i + \sum_{j \in J} q_j \{\mathbf{ B x}\}_j\right).
\end{align*}
In particular, we have that
\begin{align} \label{eq:L0_expression}
L_0(\mathbf{x,y})
&= - \left(\sum_{i \in I} p_i\{\mathbf{ A y}\}_i + \sum_{j \in J} q_j \{\mathbf{ B x}\}_j \right) \nonumber\\
&=  - \bigg(\sum_{i \in I} p_i\{\mathbf{ A y}\}_i + \sum_{j \in J} q_j \{\mathbf{ B x}\}_j \nonumber\\
&- \sum_{j \in J, i \in I^c} q_j  B_{j,i} x_i  - \sum_{i \in I, j \in J^c} p_i  A_{i,j} y_j \nonumber\\
&+ \sum_{j \in J, i \in I^c} q_j  B_{j,i} x_i +  \sum_{i \in I, j \in J^c} p_i  A_{i,j} y_j\bigg) \nonumber \\
&= - \bigg( \sum_{i\in I} p_i\{\mathbf{ A q}\}_i (\sum_{j \in J}y_j- \sum_{i \in I}x_i) \nonumber\\
&+  \sum_{j \in J, i \in I^c} q_j  B_{j,i} x_i +  \sum_{i \in I, j \in J^c} p_i  A_{i,j} y_j\bigg).
\end{align}
Similarly to before, for $(\mathbf{x,y}) \in \overline{\mathcal D}$, we consider
\begin{equation} \label{eq:JK}
H_0(\mathbf{x,y}) := \mathcal{L} V_0 (\mathbf{x,y}) =  L_0(\mathbf{x,y}) + K_0(\mathbf{x,y}),
\end{equation}
where, in general,
$$ K_0(\mathbf{x,y}) = \sum_{i \in I}  p_i \left( \sum_{k =1}^n R_{ik}^2(\mathbf{x}) \right)
 + \sum_{j \in J}  q_j \left( \sum_{k=1}^n S_{jk}^2(\mathbf{y}) \right),$$
and specifically for model~\eqref{eq:stoc2}
\begin{align} \label{eq:K0_special}
K_0(\mathbf{x,y}) = \sum_{i \in I}  p_i \left( \sum_{k \neq i} \frac{\sigma_k^2}{2}x_k^2 + \frac{\sigma_i^2}{2}(1-x_i)^2\right) \nonumber \\
 + \sum_{j \in J}  q_j \left( \sum_{k \neq j} \frac{\eta_k^2}{2}y_k^2 + \frac{\eta_j^2}{2}(1-y_j)^2\right).
 \end{align}
Recall that there is also an anti-equilibrium of maximal support, denoted by $(\mathbf p^*, \mathbf q^*)$, which is also not interior by Lemma~\ref{lem:supports}.
We now additionally consider the function
\begin{equation} \label{V_noint}
V_1(\mathbf{x, y}) := - \sum_{i} p_i^* \ln x_i - \sum_{j} q_j^* \ln y_j,
\end{equation}
which we also see as a Lyapunov-type function on $\mathcal D$, going to infinity when approaching $\Delta_{\partial,1}$ due to Lemma~\ref{lem:supports}.
Writing,
\begin{align}\label{eq:L1}
L_1(\mathbf{x,y}) := &- \sum_{i}  p_i^* \left(\{\mathbf{ A y}\}_i - \mathbf{x^{\top}  A y}\right) \nonumber\\
& - \sum_{j} q_j^* \left(\{\mathbf{ B x}\}_j - \mathbf{y^{\top}  B x}\right)
\end{align}
and
$$ K_1(\mathbf{x,y}) = \sum_i  p_i^* \left( \sum_{k =1}^n R_{ik}^2(\mathbf{x}) \right)
 + \sum_j  q_j^* \left( \sum_{k=1}^n S_{jk}^2(\mathbf{y}) \right),$$
where for model~\eqref{eq:stoc2} this reads
\begin{align*}
K_1(\mathbf{x,y}) = &\sum_{i} p_i^* \left( \sum_{k \neq i} \frac{\sigma_k^2}{2}x_k^2 + \frac{\sigma_i^2}{2}(1-x_i)^2\right) \\
& + \sum_{j}  q_j^* \left( \sum_{k \neq j} \frac{\eta_k^2}{2}y_k^2 + \frac{\eta_j^2}{2}(1-y_j)^2\right),
\end{align*} 
we have from inequality~\eqref{eq:antiequ} that $H_1$ satisfies
\begin{equation} \label{eq:H1}
H_1(\mathbf{x,y}) = \mathcal{L} V_1(\mathbf{x,y}) = L_1(\mathbf{x,y}) + K_1(\mathbf{x,y}) > 0,
\end{equation}
for all $\mathbf x, \mathbf y \in \overline{\mathcal{D}}$ except if $(\mathbf p^*, \mathbf q^*)$ is a corner point and $(\mathbf{x,y}) = (\mathbf p^*, \mathbf q^*)$ (then $H_1(\mathbf{x,y}) =0$.)

\textbf{2.1.} Assume we have 
\begin{equation*}
  K_0(\mathbf{x,y}) >  - L_0(\mathbf{x,y}), \ \text{i.e. } H_0 (\mathbf{x,y}) > 0,
\end{equation*}
for all $(\mathbf{x,y}) \in \partial \mathcal D$,
except if $(\mathbf p, \mathbf q)$ is a corner point and $(\mathbf{x,y}) = (\mathbf p,\mathbf q)$ such that $H_0(\mathbf{x,y}) =0$ --- but then $H_1(\mathbf{x,y}) > 0$ in this case due to Lemma~\ref{lem:supports}. 
Now we can follow the same arguments as before, upon introducing for completeness and simplicity  the remainder index sets
\begin{align*}
\tilde I &= [n] \setminus \big( supp(\mathbf p) \cup supp(\mathbf p^*) \big), \\
 \tilde J &= [m] \setminus \big( supp(\mathbf q) \cup supp(\mathbf q^*) \big),
\end{align*}  
with
\begin{equation} \label{V_2}
V_2(\mathbf{x, y}) := - \sum_{i\in\tilde I} \ln x_i - \sum_{j \in \tilde J} \ln y_j.
\end{equation}
Similarly to before, we write
\begin{align*}
L_2(\mathbf{x,y}) := &- \sum_{i\in\tilde I}  \left(\{\mathbf{ A y}\}_i - \mathbf{x^{\top}  A y}\right) \\
& - \sum_{j\in\tilde J} \left(\{\mathbf{ B x}\}_j - \mathbf{y^{\top}  B x}\right)
\end{align*}
and
$$ K_2(\mathbf{x,y}) = \sum_{i \in \tilde I}  \left( \sum_{k =1}^n R_{ik}^2(\mathbf{x}) \right)
 + \sum_{j \in \tilde J}  \left( \sum_{k=1}^n S_{jk}^2(\mathbf{y}) \right),$$
such that we have $H_2$ on $\overline{\mathcal{D}}$ given by
\begin{equation} \label{eq:H2}
H_2(\mathbf{x,y}) = \mathcal{L} V_2(\mathbf{x,y}) = L_2(\mathbf{x,y}) + K_2(\mathbf{x,y}).
\end{equation}
 Note that, if $\tilde I$ and $\tilde J$ are both empty, i.e.~equilibrium and anti-equilibrium together have full support, we simply have $V_2 = H_2 = 0$.

For $\alpha, \beta, \gamma \in [0,1]$ such that $\alpha + \beta + \gamma =1$, we can summarize these quantities into
$V_{\alpha, \beta, \gamma} = \alpha V_0 + \beta V_1 + \gamma V_2$
and, by linearity,
$H_{\alpha, \beta, \gamma} =  \mathcal L V_{\alpha, \beta, \gamma} = \alpha H_0 + \beta H_1 + \gamma H_2.$
Now $V_{\alpha, \beta, \gamma}$ is a Lyapunov function for $\mathcal D$ and $\partial \mathcal D$ with the same properties as before, in particular $H_{\alpha, \beta, \gamma} > 0$ on $\partial \mathcal D$ for $\gamma > 0$ small enough.
Condition~\eqref{eq:cond_large_noise} for the situation of the specific model~\eqref{eq:stoc2} can be derived similarly to bound~\eqref{eq:lowerbound} and using equation~\eqref{eq:L0_expression}.

\textbf{2.2.} 
We now treat the case with sufficiently small noise in order to show concentration on the part of the boundary corresponding with the support of
$(\mathbf p, \mathbf q)$. 

Note from~\eqref{eq:equil} (and the proof of Theorem A.1.) that $ - L_0(\mathbf{x,y}) >0$ for all $(\mathbf x, \mathbf y) \notin \Delta_{\partial, 1}$. Assuming without loss of generality that $\sum_{i\in I} p_i\{\mathbf{ A q}\}_i =0$, we can deduce from~\eqref{eq:L0_expression} that
$$ - L_0(\mathbf{x,y}) = \sum_{j \in J, i \in I^c} q_j  B_{j,i} x_i +  \sum_{i \in I, j \in J^c} p_i  A_{i,j} y_j $$
Hence, we obtain that
$$\min_{i \in I^c} \sum_{j \in J} q_j  B_{j,i} > 0, \quad   \min_{j \in J^c} \sum_{i \in I} p_i  A_{i,j}  > 0,$$ 
if $\emptyset \neq I \subsetneq [n], \emptyset \neq J \subsetneq [m]$ which holds for at least one of them. Without loss of generality, we will assume this for both index sets in the following since, if one of them is empty or the full index set, the argument reduces immediately to the other index set in a straightforward way.

Thus, we can choose noise terms such that
\begin{align}
    \min_{i \in I^c} \sum_{j \in J} q_j  B_{j,i} &> \max_{y} \max_{j} \left\{ \sum_{k=1}^n S_{jk}^2(\mathbf{y}) \right\}  \label{eq:cond_small_noise_gen_y}\\
\min_{j \in J^c} \sum_{i \in I} p_i  A_{i,j} &>  \max_{x} \max_{i} \left\{ \sum_{k =1}^n R_{ik}^2(\mathbf{x})  \right\}. \label{eq:cond_small_noise_gen_x}
\end{align}
For the special case of model~\eqref{eq:stoc2}, we can easily derive, using~\eqref{eq:K0_special}, the sufficient conditions
\begin{align}
  \min_{i \in I^c} \sum_{j \in J} q_j  B_{j,i}   
&>\max_{j} \left\{ m \frac{\eta_j^2}{2} \right\}, \label{eq:cond_small_noise_spec_y} \\
\min_{j \in J^c} \sum_{i \in I} p_i  A_{i,j} 
&> \max_{i} \left\{ n \frac{\sigma_i^2}{2} \right\}, \label{eq:cond_small_noise_spec_x}
\end{align}
to satisfy assumptions~\eqref{eq:cond_small_noise_gen_y} and~\eqref{eq:cond_small_noise_gen_x}.
Conditions~\eqref{eq:cond_small_noise_spec_y} and~\eqref{eq:cond_small_noise_spec_x} are clearly implied by condition~\eqref{eq:cond_small_noise}, as we assumed $\sum_{i \in I} p_i\{\mathbf{ A q}\}_i = 0$ without loss of generality.

In particular, under conditions~\eqref{eq:cond_small_noise_gen_y} and~\eqref{eq:cond_small_noise_gen_x}, we have
\begin{align*}
     K_0(\mathbf{x,y}) &=  \sum_{i \in I}  p_i \left( \sum_{k =1}^n R_{ik}^2(\mathbf{x}) \right)
 + \sum_{j \in J}  q_j \left( \sum_{k=1}^n S_{jk}^2(\mathbf{y}) \right) \\
 &< - L_0(\mathbf{x,y}),
 \end{align*}
 i.e.~$H_0(\mathbf{x,y}) < 0$, for all $(\mathbf{x,y}) \in \Delta_{\partial,2}$. This implies that any invariant measures supported on $\Delta_{\partial,2}$ are repelling with respect to $\mathcal D$ (cf.~Section~4.1).

For any $(\mathbf{x,y}) \in \Delta_{\partial,1}$, we have
$ K_1(\mathbf{x,y})  > 0$ by condition~\eqref{eq:matrix_cond}, since $ K_1(\mathbf{x,y})  =0$ iff $(\mathbf{x,y}) = (\mathbf p^*, \mathbf q^*)$ is a corner point which then cannot be in $\Delta_{\partial,1}$.
Additionally, we know from inequality~\eqref{eq:antiequ} that $L_1(\mathbf{x,y}) \geq 0 $. Hence, we obtain $H_1(\mathbf{x,y}) > 0$ for all $(\mathbf{x,y}) \in \Delta_{\partial,1}$, and, even more generally that $H_1(\mathbf{x,y}) > 0$ for $(\mathbf{x,y}) \in \partial \mathcal D$  unless $(\mathbf{x,y}) = (\mathbf p^*, \mathbf q^*)$ is a corner point (cf.~also~\eqref{eq:H1}).
This implies that any invariant measures supported on $\Delta_{\partial,1}$ are attracting with respect to $\mathcal D$ (cf.~Section~4.1).
In particular, by considering again $V_{\alpha, \beta, \gamma} = \alpha V_0 + \beta V_1 + \gamma V_2$
and
$H_{\alpha, \beta, \gamma} =  \mathcal L V_{\alpha, \beta, \gamma} = \alpha H_0 + \beta H_1 + \gamma H_2$,
we may choose $\alpha$ and $\gamma$ sufficiently small and $\beta$ sufficiently large to obtain that the boundary $\partial \mathcal D$ as a whole is attracting in the sense of Theorem~\ref{thm:prep} (potentially up to exclusion of $(\mathbf p^*, \mathbf q^*)$ if it is a corner point).

It remains to show that mass is only accumulating at $\Delta_{\partial,1}$.
We introduce 
$ X_1 = \sum_{i \in I} x_i, X_2 = \sum_{i \in I^c} x_i,$
such that $X_1 + X_2 =1$
and
$ Y_1 = \sum_{j \in J} y_j, Y_2 = \sum_{j \in J^c} y_j,$
such that $Y_1 + Y_2 =1$.
Hence, we can split the boundary $\partial \mathcal D$ into four branches connecting the four ``corners"
\begin{align*}
C_{1,0}&=(X_1=1, Y_1 =0), \quad C_{0,0} =(X_1=0, Y_1 =0), \\
 C_{1,1}&=(X_1=1, Y_1 =1), \quad C_{0,1}=(X_1 =0, Y_1 =1).
\end{align*}
For the branch connecting $C_{0,0}$ and $C_{0,1}$ we
consider 
$$V_0^y (x,y) := \sum_{j \in J} (- q_j) \ln y_j$$
as a Lyapunov function for $C_{0,0}$ and 
$$V_1^y (x,y) := \sum_{j \in J^c} (- q_j^*) \ln y_j$$
as a Lyapunov function for $C_{0,1}$. 
Then $L V_0^y = H_0^y < 0$ at $C_{0,0}$ by~\eqref{eq:cond_small_noise_gen_y} and $L V_1^y = H_1^y > 0$ at $C_{0,1}$ by the considerations on $H_1$ in the previous paragraph.
Similarly, for $C_{0,0}$ and $C_{1,0}$, we can choose
$$V_0^x (x,y) := \sum_{i\in I} (- p_i) \ln x_i, \quad V_1^x (x,y) := \sum_{i \in I^c} (- p_i^*) \ln x_i$$
and check $L V_0^x = H_0^x < 0$ at $C_{0,0}$ by~\eqref{eq:cond_small_noise_gen_x} and $L V_1^x = H_1^x > 0$ at $C_{1,0}$.
Analogously, we check $L V_0^y = H_0^y < 0$ at $C_{1,0}$  and $L V_1^y = H_1^y > 0$ at $C_{1,1}$, and also $L V_0^x = H_0^x < 0$ at $C_{0,1}$  and $L V_1^x = H_1^x > 0$ at $C_{1,1}$. 
Hence, the only fully attracting part of the boundary is $\Delta_{\partial,1}$ where therefore the stochastic flow from the interior has to accumulate, see Figure~\ref{fig:boundary}.

\begin{figure}[ht]
        \centering
  		\begin{overpic}[width=1\linewidth]{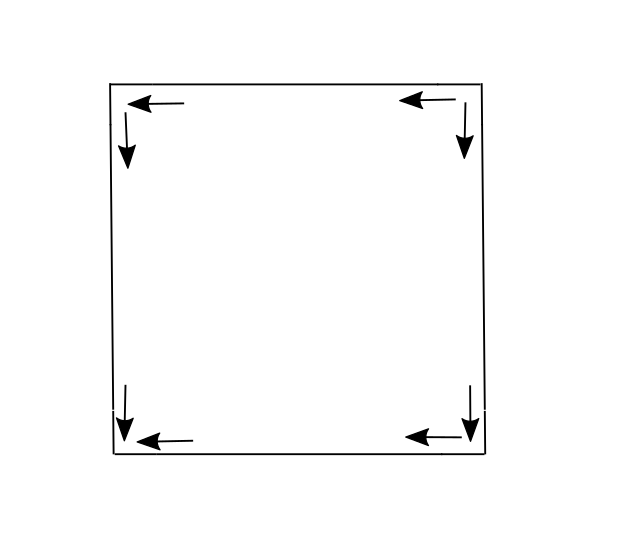}   
\put(7,75){\small $C_{1,0}$}
\put(21,58){\footnotesize $H_0^y < 0$}
\put(64,58){\footnotesize $H_0^y < 0$}
\put(30,69){\footnotesize $H_1^x > 0$}
\put(52,16){\footnotesize $H_0^x < 0$}
\put(64,26){\footnotesize $H_1^y > 0$}
\put(21,25){\footnotesize $H_1^y > 0$}
\put(52,69){\footnotesize $H_0^x < 0$}
\put(32,16){\footnotesize $H_1^x > 0$}	
\put(80,76){\small $C_{0,0}$}
                \put(10,7){\small $C_{1,1}$}
                
                \put(80,7){\small $C_{0,1}$}
        \end{overpic}
        \caption{Structuring the boundary $\partial \mathcal D$ in the small noise case with no interior Nash equilibrium.}
        \label{fig:boundary}
		\end{figure}%

%
From treating the case with non-interior Nash-equilibrium, we can now immediately deduce (a)(i) for the general situation.
Statement (a)(ii) follows from iteratively applying the arguments from above to each invariant subspace of the boundary $\partial \mathcal D$.
Statement (b) already followed from the fact that \eqref{eq:lowerbound} holds uniformly on $\partial \mathcal D$, and statement (c) follows immediately from our treatment of the case with non-interior Nash-equilibrium.
The conditions given in statement (d) have been verified in the course of the proof.
\end{proof}

We discuss some high level remarks and intuition derived by the theorem:


\begin{remark}
\begin{enumerate}[(i)]
\item The ergodic measures supported at the corners, $\delta_{v_{i,j}}$, are, depending on the noise strength, either saddles or stable nodes or partially stable (non-hyperbolic) within $\partial \mathcal D$, organizing the stochastic dynamics. We see this in more detail in the examples of Section~\ref{sec:ill_example}.
%
 \item  Note that it is easy to check that conditions~\eqref{eq:cond_large_noise} and~\eqref{eq:cond_small_noise} are mutually exclusive. In particular, one may assume without loss of generality that $\sum_{i \in I} p_i\{\mathbf{ A q}\}_i =0$, since the structure of the game is not influenced by this value, such that the conditions read more simply. 
For intermediary situations between such a large and small noise regime, it appears difficult to make general predictions due to the possibility of very heterogeneous behavior at different parts of the boundary. An even finer description of such scenarios may be an interesting point for future work.

\item In the case of zero-sum games with interior Nash, deterministic replicator dynamics is Poincar\'{e} recurrent with the trajectories cycling around the Nash equilibrium at a constant Kullback-Leibler divergence.  In contrast, stochastic replicator dynamics diverges to the boundary spending most of its time very close to pure strategy outcomes. 
\item In the case of zero-sum games without interior Nash, deterministic replicator dynamics converges to the sub-simplex defined by the Nash equilibrium of maximal support. In that subspace, the dynamic is once again Poincar\'{e} recurrent. The stochastic replicator dynamics under small enough stochastic noise converges to the correct sub-simplex, but in that subsimplex it once again  concentrates on its corners. 
Showing this has been the crucial intricacy of our analysis.
\item 
The stochastic replicator and deterministic replicator dynamics behave quite differently to each other in zero-sum games. It is natural to interpret a mixed probability distribution as describing uncertainty about the optimal strategy from the perspective of an agent. In contrast, a near-pure probability distribution, i.e.~one that puts almost all its probability mass on a single action shows an agent's confidence that this corresponds to an optimal action for them. Thus, deterministic replicator dynamics will typically lead to a cyclic-like behavior over mixed outcomes, exhibiting uncertainty about which is the correct, optimal action. In contrast, stochastic replicator dynamics spends most of its time around pure, deterministic strategy outcomes, exhibiting a false confidence about which is the correct action to take. 
\end{enumerate}

\end{remark}

Next, we will examine the stochastic version of Example~\ref{ex:deterministic}  demonstrating how our theoretical results are in good agreement with simulations in concrete zero-sum games.

\section{Examples revisited}
\label{sec:ill_example}

\subsection{Matching Pennies} \label{sec:MP_2times2_noise}
Firstly, we consider model~\eqref{eq:MP_2times2_determ} perturbed by Gaussian noise with intensity parameters $\sigma, \eta \geq 0$
\begin{equation}  \label{ex:bos_ito}
\begin{array}{rl}
\rmd X_1(t) &= 2 X_1(t)(1- X_1(t)(2 Y_1(t)-1)
\rmd t \\
&+ \sigma X_1(t) (1-X_1(t)) \,  \rmd W_1(t)\,,\\
\rmd Y_1(t) &= 2 Y_1(t)(1-Y_1(t)(1 - 2 X_1(t)) \,  \rmd t \\
&+   \eta Y_1(t) (1-Y_1(t)) \, \rmd \tilde W_1(t)\,,
\end{array}
\end{equation}
where $X_2 = 1 - X_1$, $Y_2 = 1 -Y_1$.
Note that this system is an example of model~\eqref{eq:stoc2} after reduction to two equations as explained in Appendix~\ref{App:second_derivatives}.
Recall that the system without noise, i.e.~$\sigma=\eta =0$, is a zero-sum game with interior Nash equilibrium at
$\mathbf p =\left( \frac{1}{2}, \frac{1}{2}\right), \mathbf q = \left( \frac{1}{2}, \frac{1}{2}\right)$,
and constant of motion
$V(x,y) = -\frac{1}{2} \left( \ln x + \ln(1-x) + \ln(y) + \ln(1-y)\right).$
Denoting the generator of the Markov semigroup associated with the SDE~\eqref{ex:bos_ito} by $\mathcal L$, we obtain 
$H(x,y) = \mathcal{L} V(x,y) = \frac{\sigma^2}{4} \left((1-x)^2 + x^2 \right) + \frac{\eta^2}{4} \left((1-y)^2 + y^2 \right)$.
Hence, we can conclude that on the boundary $\partial \mathcal D$ of the domain $\mathcal D := (0,1) \times (0,1)$, we have, as a special case of~\eqref{eq:lowerbound}, 
$H = \mathcal L V \geq \min \{ \frac{\sigma^2}{4}, \frac{\eta^2}{4} \} > 0.$ 
As an example for Theorem~\ref{thm:general}, we can derive that, for $\sigma, \eta > 0$, 
 any invariant measure $\mu$ for the process induced by equation~\eqref{ex:bos_ito} on $\overline{\mathcal{D}} = [0,1] \times [0,1]$ is supported on the boundary $ \{0,1\} \times [0,1] \cup [0,1] \times \{0,1\} $.
In particular, any such measure is ergodic since it is given by a convex combination of the ergodic Dirac measures
$\delta_{i,j}$ for $i,j \in \{0,1\}$, supported on the corners.
Additionally, we now consider in more detail the relation between these Dirac measures via the limiting dynamics on the boundary and determine the physical measure, i.e.~the invariant measure which will be approximated by almost all trajectories starting in $\mathcal D$.
For this purpose, we restrict equation~\eqref{ex:bos_ito} to the four parts of the process-invariant boundary $\partial \mathcal D$, and use $- \ln y, - \ln(1-y), - \ln x, - \ln (1-x)$ as respective Lyapunov functions.
For example, we observe that on $\{0\} \times [0,1]$ equation~\eqref{ex:bos_ito} reduces to
\begin{equation*}
\rmd Y_1(t) = 2 \left[Y_1(t)(1-Y_1(t)) \right]\rmd t 
 + \eta Y_1(t)(1-Y_1(t)) \, \rmd \tilde W_1(t)\,.
\end{equation*}
Applying the generator $\mathcal L_y$ of the corresponding process to the respective Lyapunov functions gives
\begin{align*}
H_{0}(y)&:=\mathcal L_y (- \ln y) = -2(1-y) + \frac{\eta^2}{2}(1-y)^2,\\
 H_{1}(y)&:=\mathcal L_y (- \ln(1- y)) = 2y + \frac{\eta^2}{2}(1-y)^2
\end{align*}
and, in particular,
$ H_{0}(0)= -2 + \frac{\eta^2}{2}, \quad H_{1}(1)= 2 + \frac{\eta^2}{2}.$
Observe that, restricting to $\{0\} \times [0,1]$, the Nash equilibrium of maximum support is at $Y_1=1$ and the anti-equilibrium at $Y_1=0$, and we are in the situation of Theorem~\ref{thm:general} (c).
In particular, the measure $\delta_{0,0}$ is repelling for $ \eta < 2$ and attracting for $\eta > 2$, and $\delta_{0,1}$ is always attracting.
Similarly, we observe that on $ [0,1] \times \{0\} $ equation~\eqref{ex:bos_ito} reduces to
\begin{align*}
\rmd X_1(t) = &-2 \left[X_1(t)(1-X_1(t)) \right]\rmd t 
 \\ &+ \sigma X_1(t)(1-X_1(t)) \, \rmd  W_1(t)\,,
\end{align*}
such that the generator $\mathcal L_x$ fulfills
\begin{align*}
 \hat H_{0}(x)&:=\mathcal L_y (- \ln (1-x)) = -2x + + \frac{\sigma^2}{2}(1-x)^2,\\
 \hat H_{1}(x)&:=\mathcal L_x (- \ln x) = 2(1-x) + \frac{\sigma^2}{2}(1-x)^2,
\end{align*}
and, in particular,
$ \hat H_{0}(1)= -2 + \frac{\sigma^2}{2}, \quad \hat H_{1}(0)= 2 + \frac{\eta^2}{2},$
such that, restricting to $ [0,1] \times \{0\} $, the measure $\delta_{0,0}$ is always attracting and the measure $\delta_{1,0}$ is repelling for $ \sigma < 2$ and attracting for $\sigma > 2$.
Checking the other cases analogously, we obtain, for $\eta, \sigma < 2$, that the Dirac measures
$\delta_{i,j}$ are saddles forming the heteroclinic cycle
$ \delta_{0,0} \rightarrow \delta_{0,1} \rightarrow \delta_{1,1} \rightarrow \delta_{1,0} \to \delta_{0,0},$
mirroring the periodic orbits of the deterministic dynamics. The physical invariant measure (see section~4.1)
is then a linear combination of all Dirac measures, as illustrated in Figure~\ref{fig:2times2_stoch}.
\begin{figure}[htbp]
\centering
\begin{subfigure}[b]{0.45\textwidth}
\includegraphics[width=0.9\textwidth]{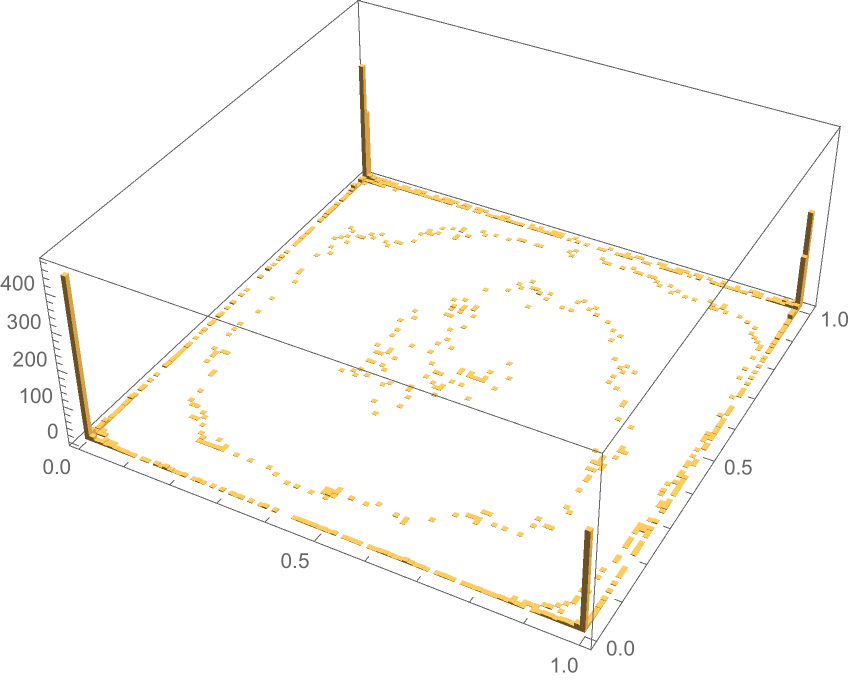}
  \caption{Stochastic replicator dynamics in MP ($\eta, \sigma = 0.2$)}
  \label{fig:2times2_stoch}
\end{subfigure}
\begin{subfigure}[b]{0.45\textwidth}
\hspace{10pt}
\includegraphics[width=0.8\textwidth]{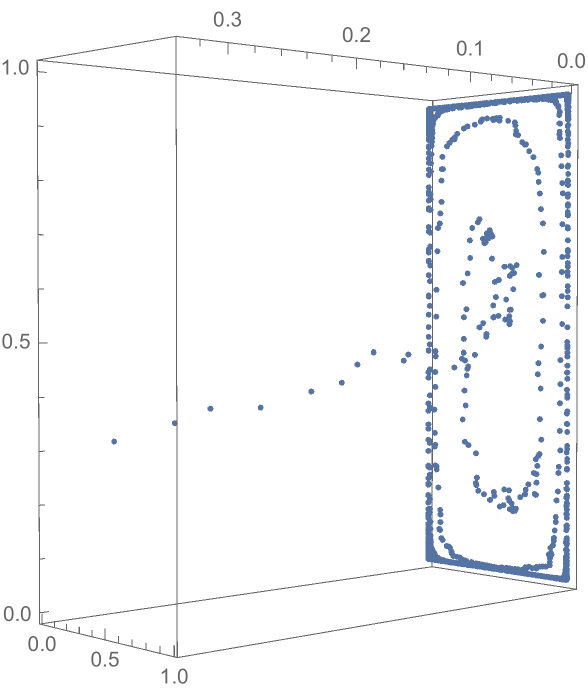}
  \caption{Stochastic  dynamics with $\sigma_1, \sigma_2, \sigma_3, \tilde \eta = 0.2$}
  \label{fig:3times2_stoch}
\end{subfigure}
\caption{(a) Stochastic replicator dynamics~\eqref{ex:bos_ito} in Matching Pennies (MP). We depict a histogram of the support of the ergodic, in fact physical, invariant measure whose support is approximated by the simulated trajectory. Unlike deterministic replicator dynamics, which cycles around the Nash equilibrium (0.5, 0.5) at a constant Kullback-Leibler distance, stochastic replicator dynamics concentrates on the corners of the simplex.
(b) Stochastic replicator dynamics~\eqref{ex:bos_ito_2}. These are derived from a $3\times2$ zero-sum game without interior Nash. The three axes correspond to the probabilities player $X$ assigns to her first and third strategies and the probability that player $Y$ assigns to her first strategy. 
 We depict a histogram of the support of the ergodic invariant measure whose support is approximated by the simulated trajectory. Similarly to the deterministic replicator dynamics, the stochastic replicator dynamics gets attracted to the boundary of the simplex that is defined by the equilibrium of maximum support. In that sub-simplex and unlike the deterministic replicator dynamic that would cycle around the equilibrium, the 
stochastic replicator dynamics  concentrates on the corners of that  sub-simplex.}
\end{figure}

\subsection{Example of game without interior Nash equilibrium} \label{sec:ex3times2}
As in Example~\ref{ex:deterministic}, we now add one dimension and consider the stochastic version of model~\eqref{eq:MP_3times2_determ}, again simplifying the $Y$-components to $Y_1$ and $1-Y_1$ but now writing out the equations for all $X$-components, i.e., also for $X_3$.
Hence, in terms of ~\eqref{eq:stoc2} we summarize the noise for $Y_1$ into $\tilde \eta \, \rmd \tilde W(t)$ and obtain
\begin{equation}  \label{ex:bos_ito_2}
\begin{array}{rl}
\rmd X_1(t) &= [ X_1(t)(1- X_1(t)(2 Y_1(t)-1) \\
&+ X_1(t) X_2(t)(2 Y_1(t)-1) + 2 X_1(t)X_3(t) ]
\rmd t \\
 &+ \sigma_1 X_1(t) (X_1(t)-1) \,  \rmd W_1(t)   \\&+\sigma_2 X_1(t) X_2(t) \,\rmd W_2(t) + \sigma_3 X_1(t) X_3(t) \,  \rmd W_3(t)\,,\\
\rmd X_2(t) &= [ X_2(t)(1- X_2(t)(1 -2 Y_1(t)) \\
&+ X_1(t) X_2(t)(1-2 Y_1(t)) + 2 X_2(t)X_3(t) ] 
\rmd t \\
 &+  \sigma_2 X_2(t) (X_2(t)-1) \, \rmd  W_2(t) + \\& \sigma_1 X_2(t) X_1(t) \, \rmd  W_1(t) + \sigma_3 X_2(t) X_3(t) \,  \rmd W_3(t)\,,\\
 \rmd X_3(t) &= [ -2 X_3(t)(1- X_3(t)) - X_3(t) X_2(t)(1-2 Y_1(t)) \\&
 - X_3(t)X_1(t)(2Y_1(t)-1) ]
\rmd t \\
 &+  \sigma_3 X_3(t) (X_3(t)-1) \, \rmd  W_3(t) \\
 &+ \sigma_1 X_3(t) X_1(t) \, \rmd  W_1(t) + \sigma_2 X_2(t) X_3(t) \,  \rmd W_2(t)\,,\\
\rmd Y_1(t) &= \left[Y_1(t)(1-Y_1(t)(2X_2(t) - 2 X_1(t)\right] \,  \rmd t \\
&+  \tilde \eta Y_1(t) (1-Y_1(t)) \, \rmd \tilde W(t)\,.
\end{array}
\end{equation}
where $X_1(t) + X_2(t) + X_3(t)=1$ is satisfied at any time step. 
Recall that the Nash equilibrium of maximal support is 
$\mathbf p =\left( \frac{1}{2}, \frac{1}{2}, 0\right), \mathbf q = \left( \frac{1}{2}, \frac{1}{2}\right)$.
In this case, the Lyapunov function $V_0$~\eqref{V0} is given by
$$V_0(\mathbf{X, Y}) :=  - \frac{1}{2}\ln X_1 -  \frac{1}{2}\ln X_2 - \frac{1}{2} \ln Y_1 -\frac{1}{2} \ln (1-Y_1).$$
Recalling from~\eqref{eq:JK} that
$H_0(\mathbf{X,Y}) = L_0(\mathbf{X,Y}) + K_0(\mathbf{X,Y})$,
we obtain
\begin{align*}
L_0(\mathbf{X,Y})&= - \frac{1}{2} (1-X_1) (2 Y_1 -1) - \frac{1}{2}X_2 (2Y_1-1) \\
&- \frac{1}{2}(1-X_2) (1- 2Y_1) - \frac{1}{2}X_1 (1- 2Y_1)  -2 X_3 \\
&= -2 X_3 + X_1(2Y_1-1) + X_2 (1- 2Y_1) \,,\\
K_0(\mathbf{X,Y})&= \frac{1}{2}\bigg( \frac{\sigma_1^2}{2}(1-X_1)^2 + \frac{\sigma_2^2}{2} X_2^2 + \frac{\sigma_3^2}{2}X_3^2 \\
&+ \frac{\sigma_2^2}{2}(1-X_2)^2 + \frac{\sigma_1^2}{2} X_1^2 + \frac{\sigma_3^2}{2}X_3^2\bigg)\\
&+ \frac{\tilde \eta^2}{2} Y_1^2 + \frac{\tilde \eta^2}{2} (1-Y_1)^2  \,.
\end{align*}
Hence, at $\mathbf{X}=(0,0,1)$, we obtain
$$ H_0(\mathbf{X,Y}) = - 2 + \sum_{j=1}^3 \frac{\sigma_j^2}{2} + \frac{\tilde \eta^2}{2} Y_1^2 + \frac{\tilde \eta^2}{2} (1-Y_1)^2,$$
whose sign depends on the size of $\sigma_j, \tilde \eta$, as described in Theorem~\ref{thm:general}, determining whether
$$\Delta_{\partial,2} := \{ (\mathbf{X,Y}) \in  \overline{\mathcal D}\,:\,  X_1=X_2=0, X_3 =1 \}$$
is attracting or repelling. Note that for $\sigma_j, \tilde \eta < 1$, the noise is sufficiently small such that $H_0 < 0$, and we are in the situation of Theorem~\ref{thm:general} (c)(ii).

In more detail, in this case $V_1$ \eqref{V_noint} is given by
$V_1(\mathbf{X, Y}) :=  - \ln X_3$
such that we have for $H_1$ as in~\eqref{eq:H1}
\begin{equation*}
H_1(\mathbf{X, Y}) = \mathcal L  V_1(\mathbf{X, Y}) =  L_1(\mathbf{X, Y}) + K_1(\mathbf{X, Y}),
 \end{equation*}
 where
\begin{equation*}
L_1(\mathbf{X, Y}) = 2(1-X_3) + X_2(1-2Y_1) + X_1(2Y_1-1),
\end{equation*}
and
\begin{equation*}
K_1(\mathbf{X, Y}) = \frac{\sigma_3^2}{2}(1-X_3)^2 + \frac{\sigma_2^2}{2}X_2^2 + \frac{\sigma_1^2}{2}X_1^2.
\end{equation*}
Hence, for $X_3=0$ we have
$$L_1(\mathbf{X, Y}) = 2 +(2Y_1-1)(X_1-X_2) \geq 1,$$
and therefore uniformly
$$H_1 \geq 1+ \frac{\sigma_3^2}{2} + \frac{1}{4} \min\{\sigma_1^2, \sigma_2^2\} > 0.$$
Hence, as predicted in Theorem~\eqref{thm:general}, the subspace
$\Delta_{\partial,1} := \{ (\mathbf{X,Y}) \in  \overline{\mathcal D}\,:\, X_3 =0 \}$ 
is always attracting, i.e.~all attracting invariant measures are supported on $\Delta_{\partial,1}$ which corresponds to the support of the maximal Nash equilibrium $(\mathbf p, \mathbf q)$. On this subspace the stochastic dynamics have the same characterization as elaborated in Section~5.1.
The behavior is illustrated in Figure~\ref{fig:3times2_stoch}.

\section{Conclusion}

We study a general stochastic variant of replicator dynamics and use techniques from stochastic differential equations and stochastic stability theory to characterize its invariant measures in general zero-sum games.
Organizing the dynamical system at hand by its invariant measures is the classical approach of ergodic theory to deterministic as well as stochastic dynamics and, by that, offers a highly suitable framework for comparing both scenarios which we suggest to employ also in future efforts.
Whereas the deterministic bimatrix replicator dynamics are characterized by infinitely many invariant measures supported on periodic orbits in the interior of the simplex, this structure is broken down by the stochastic perturbations. The fact that this happens as such is not surprising but the concrete outcome is intriguing:
the  ergodic invariant measures found in our model are supported on pure strategy profiles even if the Nash equilibrium is fully mixed.  Thus, the emergent behavior is in contrast both to the Nash equilibrium prediction as well as the behavior suggested by the standard deterministic replicator equation, i.e.~recurrence/cycles, in the sense that the uncertainty drives players away from mixed strategies. 
In particular, depending on the noise strength and the position of the potentially non-interior Nash equilibrium, we can determine the \emph{physical} ergodic invariant measure, whose pure strategy support is approached by the time averages from almost any starting point in the state space.


\section*{Acknowledgements}
M.E. thanks the DFG SPP 2298 "Theoretical Foundations of Deep Learning" for supporting his research. He has been additionally supported by Germany’s Excellence Strategy – The Berlin Mathematics Research Center MATH+ with the number EXC-2046/1, project ID:
390685689 (subprojects AA1-8 and AA1-18). Furthermore, M.E. thanks the DFG CRC 1114 and the Einstein Foundation (IPF-2021-651) for support.

\bibliographystyle{abbrv}
\bibliography{mybibfile}

\begin{thebibliography}{10}

\bibitem{abe2022mutation}
K.~Abe, M.~Sakamoto, and A.~Iwasaki.
\newblock Mutation-driven follow the regularized leader for last-iterate
  convergence in zero-sum games.
\newblock In {\em Uncertainty in Artificial Intelligence}, pages 1--10. PMLR,
  2022.

\bibitem{adolphs2018local}
L.~Adolphs, H.~Daneshmand, A.~Lucchi, and T.~Hofmann.
\newblock Local saddle point optimization: {A} curvature exploitation approach.
\newblock In {\em The 22nd International Conference on Artificial Intelligence
  and Statistics, {AISTATS} 2019, 16-18 April 2019, Naha, Okinawa, Japan},
  pages 486--495, 2019.

\bibitem{Arora05themultiplicative}
S.~Arora, E.~Hazan, and S.~Kale.
\newblock The multiplicative weights update method: a meta-algorithm and
  applications.
\newblock {\em Theory of Computing}, 8(1):121--164, 2012.

\bibitem{eatwell1987new}
R.~J. Aumann.
\newblock Game theory.
\newblock In J.~Eatwell, M.~Milgate, and P.~Newman, editors, {\em The new
  Palgrave: a dictionary of economics}. London (UK) Macmillan, 1987.

\bibitem{bailey2020finite}
J.~P. Bailey, G.~Gidel, and G.~Piliouras.
\newblock Finite regret and cycles with fixed step-size via alternating
  gradient descent-ascent.
\newblock {\em CoRR}, abs/1907.04392, 2019.

\bibitem{BaileyEC18}
J.~P. Bailey and G.~Piliouras.
\newblock Multiplicative weights update in zero-sum games.
\newblock In {\em ACM Conference on Economics and Computation}, 2018.

\bibitem{bailey2019multi}
J.~P. {Bailey} and G.~{Piliouras}.
\newblock {Multi-Agent Learning in Network Zero-Sum Games is a {H}amiltonian
  System}.
\newblock In {\em AAMAS}, 2019.

\bibitem{bauer2019stabilization}
J.~Bauer, M.~Broom, and E.~Alonso.
\newblock The stabilization of equilibria in evolutionary game dynamics through
  mutation: mutation limits in evolutionary games.
\newblock {\em Proceedings of the Royal Society A}, 475(2231):20190355, 2019.

\bibitem{BenaimHofbauerSandholm}
M.~Bena\"{\i}m, J.~Hofbauer, and W.~H. Sandholm.
\newblock Robust permanence and impermanence for stochastic replicator
  dynamics.
\newblock {\em J. Biol. Dyn.}, 2(2):180--195, 2008.

\bibitem{benaim2006stochastic}
M.~Bena{\"\i}m, J.~Hofbauer, and S.~Sorin.
\newblock Stochastic approximations and differential inclusions, part ii:
  Applications.
\newblock {\em Mathematics of Operations Research}, 31(4):673--695, 2006.

\bibitem{BenaimStrickler}
M.~Bena\"{\i}m and E.~Strickler.
\newblock Random switching between vector fields having a common zero.
\newblock {\em Ann. Appl. Probab.}, 29(1):326--375, 2019.

\bibitem{Brown1951}
G.~W. Brown.
\newblock Iterative solutions of games by fictitious play.
\newblock In T.~C. Coopmans, editor, {\em Activity Analysis of Productions and
  Allocation}, 374-376. Wiley, 1951.

\bibitem{Cesa06}
N.~Cesa-Bianchi and G.~Lugosi.
\newblock {\em Prediction, Learning, and Games}.
\newblock Cambridge University Press, 2006.

\bibitem{cheung2018multiplicative}
Y.~K. Cheung.
\newblock Multiplicative weights updates with constant step-size in graphical
  constant-sum games.
\newblock In {\em Advances in Neural Information Processing Systems}, pages
  3528--3538, 2018.

\bibitem{cheung2019vortices}
Y.~K. Cheung and G.~Piliouras.
\newblock Vortices instead of equilibria in minmax optimization: Chaos and
  butterfly effects of online learning in zero-sum games.
\newblock In {\em COLT}, 2019.

\bibitem{cheung2020chaos}
Y.~K. Cheung and G.~Piliouras.
\newblock Chaos, extremism and optimism: Volume analysis of learning in games.
\newblock In {\em NeurIPS}, 2020.

\bibitem{dantzig1951proof}
G.~B. Dantzig.
\newblock A proof of the equivalence of the programming problem and the game
  problem.
\newblock In {\em Activity {A}nalysis of {P}roduction and {A}llocation}, Cowles
  Commission Monographs, No. 13, pages 330--335. John Wiley \& Sons, Inc., New
  York, N.Y.; Chapman \& Hall, Ltd., London, 1951.

\bibitem{daskalakis2018training}
C.~Daskalakis, A.~Ilyas, V.~Syrgkanis, and H.~Zeng.
\newblock Training {GAN}s with optimism.
\newblock In {\em ICLR}, 2018.

\bibitem{daskalakis2018limit}
C.~Daskalakis and I.~Panageas.
\newblock The limit points of (optimistic) gradient descent in min-max
  optimization.
\newblock In {\em Advances in Neural Information Processing Systems}, pages
  9236--9246, 2018.

\bibitem{foster}
D.~Foster and P.~Young.
\newblock Stochastic evolutionary game dynamics.
\newblock {\em Theoret. Population Biol.}, 38(2):219--232, 1990.

\bibitem{freund1999adaptive}
Y.~Freund and R.~E. Schapire.
\newblock Adaptive game playing using multiplicative weights.
\newblock {\em Games and Economic Behavior}, 29(1-2):79--103, 1999.

\bibitem{fudenberg1992evolutionary}
D.~Fudenberg and C.~Harris.
\newblock Evolutionary dynamics with aggregate shocks.
\newblock {\em Journal of Economic Theory}, 57(2):420--441, 1992.

\bibitem{gidel2019a}
G.~Gidel, H.~Berard, G.~Vignoud, P.~Vincent, and S.~Lacoste-Julien.
\newblock A variational inequality perspective on generative adversarial
  networks.
\newblock In {\em ICLR}, 2019.

\bibitem{goodfellow2014generative}
I.~Goodfellow, J.~Pouget-Abadie, M.~Mirza, B.~Xu, D.~Warde-Farley, S.~Ozair,
  A.~Courville, and Y.~Bengio.
\newblock Generative adversarial nets.
\newblock In {\em Advances in neural information processing systems}, pages
  2672--2680, 2014.

\bibitem{HeningNgyuen2018}
A.~Hening and D.~H. Nguyen.
\newblock Coexistence and extinction for stochastic {K}olmogorov systems.
\newblock {\em Ann. Appl. Probab.}, 28(3):1893--1942, 2018.

\bibitem{HeningNguyenChesson}
A.~Hening, D.~H. Nguyen, and P.~Chesson.
\newblock A general theory of coexistence and extinction for stochastic
  ecological communities.
\newblock {\em J. Math. Biol.}, 82(6):Paper No. 56, 76, 2021.

\bibitem{HeningNguyenSchreiber}
A.~Hening, D.~H. Nguyen, and S.~J. Schreiber.
\newblock A classification of the dynamics of three-dimensional stochastic
  ecological systems.
\newblock {\em Ann. Appl. Probab.}, 32(2):893--931, 2022.

\bibitem{Hofbauer96}
J.~Hofbauer.
\newblock Evolutionary dynamics for bimatrix games: a {H}amiltonian system?
\newblock {\em J. Math. Biol.}, 34(5-6):675--688, 1996.

\bibitem{HofbauerImhof}
J.~Hofbauer and L.~A. Imhof.
\newblock Time averages, recurrence and transience in the stochastic replicator
  dynamics.
\newblock {\em Ann. Appl. Probab.}, 19(4):1347--1368, 2009.

\bibitem{hsieh2020limits}
Y.-P. Hsieh, P.~Mertikopoulos, and V.~Cevher.
\newblock The limits of min-max optimization algorithms: Convergence to
  spurious non-critical sets.
\newblock In M.~Meila and T.~Zhang, editors, {\em Proceedings of the 38th
  International Conference on Machine Learning}, volume 139 of {\em Proceedings
  of Machine Learning Research}, pages 4337--4348. PMLR, 18--24 Jul 2021.

\bibitem{Imh05}
L.~A. Imhof.
\newblock The long-run behavior of the stochastic replicator dynamics.
\newblock {\em Ann. Appl. Probab.}, 15(1B):1019--1045, 2005.

\bibitem{jin2017escape}
C.~Jin, R.~Ge, P.~Netrapalli, S.~M. Kakade, and M.~I. Jordan.
\newblock How to escape saddle points efficiently.
\newblock In {\em Proceedings of the 34th International Conference on Machine
  Learning-Volume 70}, pages 1724--1732. JMLR. org, 2017.

\bibitem{Khasminskii80}
R.~Khasminskii.
\newblock {\em Stochastic stability of differential equations}, volume~66 of
  {\em Stochastic Modelling and Applied Probability}.
\newblock Springer, Heidelberg, second edition, 2012.
\newblock With contributions by G. N. Milstein and M. B. Nevelson.

\bibitem{Kleinberg09multiplicativeupdates}
R.~Kleinberg, G.~Piliouras, and {\'E}.~Tardos.
\newblock Multiplicative updates outperform generic no-regret learning in
  congestion games.
\newblock In {\em ACM Symposium on Theory of Computing (STOC)}, 2009.

\bibitem{leonardos2021exploration}
S.~Leonardos, G.~Piliouras, and K.~Spendlove.
\newblock Exploration-exploitation in multi-agent competition: convergence with
  bounded rationality.
\newblock {\em Advances in Neural Information Processing Systems},
  34:26318--26331, 2021.

\bibitem{Akin84}
V.~Losert and E.~Akin.
\newblock Dynamics of games and genes: Discrete versus continuous time.
\newblock {\em Journal of Mathematical Biology}, 1983.

\bibitem{mertikopoulos2019optimistic}
P.~Mertikopoulos, B.~Lecouat, H.~Zenati, C.-S. Foo, V.~Chandrasekhar, and
  G.~Piliouras.
\newblock Optimistic mirror descent in saddle-point problems: Going the
  extra(-gradient) mile.
\newblock In {\em ICLR}, 2019.

\bibitem{mertikopoulos2010emergence}
P.~Mertikopoulos and A.~L. Moustakas.
\newblock The emergence of rational behavior in the presence of stochastic
  perturbations.
\newblock {\em The Annals of Applied Probability}, 20(4):1359--1388, 2010.

\bibitem{GeorgiosSODA18}
P.~Mertikopoulos, C.~Papadimitriou, and G.~Piliouras.
\newblock Cycles in adversarial regularized learning.
\newblock In {\em ACM-SIAM Symposium on Discrete Algorithms}, 2018.

\bibitem{mescheder2018training}
L.~M. Mescheder, A.~Geiger, and S.~Nowozin.
\newblock Which training methods for gans do actually converge?
\newblock In {\em International Conference on Machine Learning}, 2018.

\bibitem{Nisan:2007:AGT:1296179}
N.~Nisan, T.~Roughgarden, E.~Tardos, and V.~V. Vazirani.
\newblock {\em Algorithmic Game Theory}.
\newblock Cambridge University Press, New York, NY, USA, 2007.

\bibitem{papadimitriou2019game}
C.~Papadimitriou and G.~Piliouras.
\newblock Game dynamics as the meaning of a game.
\newblock {\em ACM SIGecom Exchanges}, 16(2):53--63, 2019.

\bibitem{perolat2020poincar}
J.~P{\'e}rolat, R.~Munos, J.-B. Lespiau, S.~Omidshafiei, M.~Rowland, P.~A.
  Ortega, N.~Burch, T.~W. Anthony, D.~Balduzzi, B.~D. Vylder, G.~Piliouras,
  M.~Lanctot, and K.~Tuyls.
\newblock From poincar{\'e} recurrence to convergence in imperfect information
  games: Finding equilibrium via regularization.
\newblock {\em ArXiv}, abs/2002.08456, 2020.

\bibitem{piliouras2014optimization}
G.~Piliouras and J.~S. Shamma.
\newblock Optimization despite chaos: Convex relaxations to complex limit sets
  via poincar{\'e} recurrence.
\newblock In {\em Proceedings of the twenty-fifth annual ACM-SIAM symposium on
  Discrete algorithms}, pages 861--873. SIAM, 2014.

\bibitem{Robinson1951}
J.~Robinson.
\newblock An iterative method of solving a game.
\newblock {\em Annals of Mathematics}, 54:296--301, 1951.

\bibitem{Sandholm10}
W.~H. Sandholm.
\newblock {\em Population Games and Evolutionary Dynamics}.
\newblock MIT Press, 2010.

\bibitem{BenaimSchreiberAtchade}
S.~J. Schreiber, M.~Bena\"{\i}m, and K.~A.~S. Atchad\'{e}.
\newblock Persistence in fluctuating environments.
\newblock {\em J. Math. Biol.}, 62(5):655--683, 2011.

\bibitem{Shalev2012}
S.~Shalev-Shwartz.
\newblock Online learning and online convex optimization.
\newblock {\em Foundations and Trends® in Machine Learning}, 4(2):107--194,
  2012.

\bibitem{sorin2009exponential}
S.~Sorin.
\newblock Exponential weight algorithm in continuous time.
\newblock {\em Mathematical Programming}, 116(1):513--528, 2009.

\bibitem{van1991stability}
E.~Van~Damme.
\newblock {\em Stability and perfection of {N}ash equilibria}, volume 339.
\newblock Springer, 1991.

\bibitem{vlatakis2019efficiently}
E.-V. Vlatakis-Gkaragkounis, L.~Flokas, and G.~Piliouras.
\newblock Efficiently avoiding saddle points with zero order methods: No
  gradients required.
\newblock In {\em Advances in Neural Information Processing Systems}, pages
  10066--10077, 2019.

\bibitem{vlatakis2019poincare}
E.-V. Vlatakis-Gkaragkounis, L.~Flokas, and G.~Piliouras.
\newblock Poincar{\'e} recurrence, cycles and spurious equilibria in
  gradient-descent-ascent for non-convex non-concave zero-sum games.
\newblock In {\em Advances in Neural Information Processing Systems 32: Annual
  Conference on Neural Information Processing Systems 2019}, 2019.

\bibitem{Neumann1928}
J.~von Neumann.
\newblock Zur {T}heorie der {G}esellschaftsspiele.
\newblock {\em Mathematische Annalen}, 100:295--300, 1928.

\bibitem{neumann}
J.~von Neumann.
\newblock Theory of games and economic behavior.
\newblock {\em Princeton University Press}, 1944.

\bibitem{Weibull}
J.~W. Weibull.
\newblock {\em Evolutionary Game Theory}.
\newblock MIT Press; Cambridge, MA: Cambridge University Press., 1995.

\bibitem{yazici2018unusual}
Y.~{Yaz{\i}c{\i}}, C.-S. {Foo}, S.~{Winkler}, K.-H. {Yap}, G.~{Piliouras}, and
  V.~{Chandrasekhar}.
\newblock {The Unusual Effectiveness of Averaging in {GAN} Training}.
\newblock {\em ArXiv e-prints}, June 2018.

\end{thebibliography}


\newpage

\appendix
\section{Proofs for deterministic replicator dynamics in zero sum games (Section~\ref{deterministiccase})}
\label{app:determinstic_system}

\subsection{Games with interior equilibria}

\begin{thm}
Consider the flow of the replicator dynamics when applied to a zero-sum game 
 that has an interior (i.e. fully mixed) Nash equilibrium $(\mathbf{p},\mathbf{q})$.
 Then given any (interior) starting point $x_0\in \mathcal D$,
  the cross entropy
\begin{equation*}
V\big((\mathbf{p},\mathbf{q});(\mathbf{x}(t),\mathbf{y}(t)) \big) = - \sum_i p_i\ln x_i(t) - \sum_j q_i\ln y_j(t)
\end{equation*}
   between the Nash equilibrium  $(\mathbf{p},\mathbf{q})$ and the trajectory $(\mathbf{x}(t),\mathbf{y}(t))$ of the system is a \textit{constant of motion}, \textit{i.e.}
    \[\frac{\rmd  V\big((\mathbf{p},\mathbf{q});(\mathbf{x}(t),\mathbf{y}(t)) \big)}{\rmd  t} |_{t=t'}= 0\].
 Otherwise, let $(\mathbf{p},\mathbf{q})$ (resp. $(\mathbf{p^*},\mathbf{q^*})$) be a not fully mixed Nash  equilibrium (resp. anti-equilibrium) of maximal support on the boundary $\partial \mathcal D$; then for each starting point $x_0\in \mathcal D$  
 and for all $t'\geq 0$ we have
 \[\frac{\rmd  V\big((\mathbf{p},\mathbf{q});(\mathbf{x}(t),\mathbf{y}(t)) \big)}{\rmd  t} |_{t=t'}< 0\]
 and 
  \[\frac{\rmd  V\big((\mathbf{p^*},\mathbf{q^*});(\mathbf{x}(t),\mathbf{y}(t)) \big)}{\rmd  t} |_{t=t'}>0.\]  
\end{thm}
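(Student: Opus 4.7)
The plan is to compute $\frac{\rmd V}{\rmd t}$ directly along the replicator flow, then repeatedly apply the zero-sum identity \eqref{eq:zero_sum} and the Nash/anti-Nash inequalities \eqref{eq:NashEqu_inequ}/\eqref{eq:Antiequ_inequ} to express everything in terms of two deviation payoffs whose sign is dictated by the equilibrium property.

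\textbf{Step 1: differentiation.} Using the replicator ODE \eqref{eq:replicator2}, write
\benn
\frac{\rmd V}{\rmd t} = -\sum_i p_i\frac{\dot x_i}{x_i} - \sum_j q_j\frac{\dot y_j}{y_j}
= -\sum_i p_i\bigl(\{\mathbf{Ay}\}_i - \mathbf{x^\top Ay}\bigr) - \sum_j q_j\bigl(\{\mathbf{Bx}\}_j - \mathbf{y^\top Bx}\bigr).
\eenn
Expanding and using $\sum_i p_i = \sum_j q_j = 1$ gives
$-\mathbf{p^\top Ay} - \mathbf{q^\top Bx} + (\mathbf{x^\top Ay} + \mathbf{y^\top Bx})$, and the parenthesized term vanishes by \eqref{eq:zero_sum}.

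\textbf{Step 2: rewrite via zero-sum.} Apply \eqref{eq:zero_sum} again in the form $\mathbf{p^\top Ay} = -\mathbf{y^\top Bp}$ and $\mathbf{q^\top Bx} = -\mathbf{x^\top Aq}$ to arrive at the clean identity
\benn
\frac{\rmd V}{\rmd t} = \mathbf{x^\top Aq} + \mathbf{y^\top Bp} - \bigl(\mathbf{p^\top Aq} + \mathbf{q^\top Bp}\bigr),
\eenn
where I subtracted the quantity $\mathbf{p^\top Aq} + \mathbf{q^\top Bp} = 0$ (again by \eqref{eq:zero_sum}) to make the next step transparent. Rewriting as $\sum_i (x_i - p_i)\{\mathbf{Aq}\}_i + \sum_j (y_j - q_j)\{\mathbf{Bp}\}_j$ and using $\sum_i x_i = \sum_i p_i = 1$ converts this to $\sum_i x_i(\{\mathbf{Aq}\}_i - \mathbf{p^\top Aq}) + \sum_j y_j(\{\mathbf{Bp}\}_j - \mathbf{q^\top Bp})$.

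\textbf{Step 3: apply the equilibrium inequality.} By \eqref{eq:NashEqu_inequ} each summand is $\leq 0$, so $\frac{\rmd V}{\rmd t} \leq 0$. If $(\mathbf{p},\mathbf{q})$ is interior, then \eqref{eq:NashEqu_inequ} must hold with equality on every coordinate (otherwise, shifting probability mass from a strictly-worse pure strategy in the support of $\mathbf p$ would give a profitable deviation), yielding $\frac{\rmd V}{\rmd t} \equiv 0$. If $(\mathbf p, \mathbf q)$ is instead an equilibrium of maximal support and not fully mixed, then quasi-strictness of such equilibria \cite{GeorgiosSODA18} gives $\{\mathbf{Aq}\}_i < \mathbf{p^\top Aq}$ (and $\{\mathbf{Bp}\}_j < \mathbf{q^\top Bp}$) strictly for every $i$ outside the support of $\mathbf p$ (resp.\ $j$ outside the support of $\mathbf q$); since $(\mathbf{x}(t),\mathbf{y}(t))$ is interior, at least one such index contributes a strictly negative summand, giving $\frac{\rmd V}{\rmd t} < 0$. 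The anti-equilibrium case is identical after reversing all inequalities via \eqref{eq:Antiequ_inequ}.

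\textbf{Expected obstacle.} The bookkeeping in Step 1 and 2 is entirely routine; the only substantive issue is justifying strict inequality in Step 3 for the non-interior case. This is handled by invoking quasi-strictness of equilibria of maximal support in zero-sum games, together with the fact that $(\mathbf{x}(t),\mathbf{y}(t))$ stays in the interior $\interior(\mathcal D)$ (the supports $\{i: x_i > 0\}, \{j: y_j > 0\}$ are flow-invariant by the multiplicative structure of \eqref{eq:replicator2}); this is the one place where we actually use something beyond the raw algebraic Nash condition.
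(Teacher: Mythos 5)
Your proposal is correct and follows essentially the same route as the paper's proof: differentiate the cross entropy along the flow, cancel $\mathbf{x^\top Ay}+\mathbf{y^\top Bx}$ via the zero-sum identity, apply it a second time to rewrite the result as deviation payoffs against $(\mathbf p,\mathbf q)$, bound these with the Nash inequality, and invoke quasi-strictness of maximal-support equilibria for the strict case. The only differences are cosmetic bookkeeping (you track $\rmd V/\rmd t$ rather than its negative and make the ``subtract zero'' step explicit), so nothing further is needed.
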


\begin{proof}
The support of the state of the system, e.g., the strategies played with positive probability, is clearly an invariant of the flow; hence, it suffices to prove this statement for any arbitrary starting point $\big(\mathbf{x}(0),\mathbf{y}(0)\big) \in \mathcal D$. 
 We examine the derivative of $V\big((\mathbf{p},\mathbf{q});(\mathbf{x}(t),\mathbf{y}(t)) \big) =  - \sum_i p_i\ln x_i(t) - \sum_j q_i\ln y_j(t)$: 

\vspace{-2mm}

\begin{eqnarray*}
\lefteqn{\sum_{i} p_{i}  \frac{d\ln(x_{i})}{dt}+ \sum_{j} q_{j}  \frac{d\ln(y_{j})}{dt}= \sum_{i}p_{i}  \frac{\dot{x}_{i}}{x_{i}}+ \sum_{j}q_{j}  \frac{\dot{y}_{j}}{y_{j}} }\\ 
&=& \sum_{i} p_i \left(\{\mathbf{Ay}\}_i - \mathbf{x^{\top} A y}\right) + \sum_{j} q_j \left(\{\mathbf{Bx}\}_j - \mathbf{y^{\top} B x}\right) \\
& = &\sum_{i} p_i \{\mathbf{Ay}\}_i  - \mathbf{x^{\top} A y} \big(  \sum_{i} p_i \big) + \\
& & \sum_{j} q_j \{\mathbf{Bx}\}_j - \mathbf{y^{\top} B x} \big( \sum_{i} q_i \big) \\
& = &\sum_{i} p_i \{\mathbf{Ay}\}_i +  \sum_{j} q_j \{\mathbf{Bx}\}_j - \big( \mathbf{x^{\top} A y}  + \mathbf{y^{\top} B x}\big)  \\
& = &\sum_{i} p_i \{\mathbf{Ay}\}_i +  \sum_{j} q_j \{\mathbf{Bx}\}_j  ~~\text{(due to property~\eqref{eq:zero_sum})} \\   
& =& - \big( \sum_{j} y_j \{\mathbf{Bp}\}_j  + \sum_{i} x_i \{\mathbf{Aq}\}_i  \big) ~~\text{(due to~\eqref{eq:zero_sum})} \\   
&  \geq & - \big(\sum_{j}  q_j \{\mathbf{Bp}\}_j  + \sum_{i} p_i \{\mathbf{Aq}\}_i  \big)  ~~\text{($(\mathbf p,\mathbf q)$ being a NE)}     \\ 
&= & 0~~\text{(due to~\eqref{eq:zero_sum})}.
\end{eqnarray*}
Considering in more detail the inequality in the penultimate line above, we observe that for each strategy $i$ and each strategy $j$
$$\sum_{k} \big(p_i -x_i\big)A_{i,k} q_k \geq 0, \quad \sum_{k} \big(q_i -y_j\big)B_{j,k} p_k \geq 0,$$
due to the Nash equilibrium property. 
Since the states $x$ and $y$ are fully mixed, we have 
$$\sum_{k} \big(p_i -x_i\big)A_{i,k} q_k + \sum_{k} \big(q_i -y_j\big)B_{j,k} p_k = 0,$$
 if and only if $(\mathbf p, \mathbf q)$ is fully mixed; in particular, this can be seen from the fact that, if the Nash equilibrium is interior, then all unilateral deviations (e.g.~the first agent deviating from $\mathbf p$ to $\mathbf x$) do not affect the utility of the agent who randomizes over strategies of equal expected payoff. 

Hence, if the zero-sum game has no interior Nash equilibrium but only an equilibrium of maximal support, deviations to strategies not in the equilibrium support result in strictly less payoff~\cite{GeorgiosSODA18} and, hence, the inequality above is strict. The analysis in the case of an anti-equilibrium of maximal support is identical under reverting the direction of the inequalities.
\end{proof}

\subsection{Games without interior equilibria}

We commence the analysis with the following  technical lemma, whose proof
can be found in \cite{Akin84} but is also provided here for completeness:

\begin{lem}
\label{eq:derivative}
If $g(t)$ is a twice differentiable function with uniformly bounded second derivative and $\lim_{t\rightarrow\infty}g(t)$ exists and is finite then we have that $\lim_{t\rightarrow\infty}\dot{g}(t)=0$.
\end{lem}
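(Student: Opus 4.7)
The argument is by contradiction, exploiting the fact that a bounded second derivative forces $\dot g$ to vary slowly, so that if $\dot g$ stays bounded away from zero on a set of arbitrarily large times, $g$ must accumulate a non-vanishing amount of variation near each such time, which is incompatible with $g$ having a finite limit.

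\textbf{Step 1: Set up the contradiction.} I will assume that $\dot g(t) \not\to 0$, so there exist $\varepsilon > 0$ and a sequence $t_n \to \infty$ with $|\dot g(t_n)| \geq \varepsilon$. By passing to a subsequence (and possibly replacing $g$ by $-g$), I may assume without loss of generality that $\dot g(t_n) \geq \varepsilon$ for all $n$, and that the $t_n$ are spaced so that the intervals considered below do not overlap.

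\textbf{Step 2: Use the bound on $\ddot g$.} Let $M>0$ be a uniform bound for $|\ddot g|$. By the mean value theorem, $\dot g$ is $M$-Lipschitz, so for every $s$ with $|s - t_n| \leq \delta := \varepsilon/(2M)$,
\[
\dot g(s) \;\geq\; \dot g(t_n) - M|s - t_n| \;\geq\; \varepsilon - M\delta \;=\; \tfrac{\varepsilon}{2}.
\]
Thus $\dot g$ is bounded below by $\varepsilon/2$ on each interval $I_n := [t_n - \delta, t_n + \delta]$, where $\delta$ is a fixed positive constant independent of $n$.

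\textbf{Step 3: Integrate and conclude.} Integrating yields
\[
g(t_n + \delta) - g(t_n - \delta) \;=\; \int_{t_n - \delta}^{t_n + \delta} \dot g(s)\,\rmd s \;\geq\; \varepsilon\,\delta \;>\; 0.
\]
Since $t_n \to \infty$, both $t_n - \delta$ and $t_n + \delta$ tend to infinity, so by the assumed existence of $\lim_{t \to \infty} g(t)$ the left-hand side must tend to $0$, contradicting the uniform positive lower bound $\varepsilon\delta$. Hence $\dot g(t) \to 0$.

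\textbf{Expected difficulty.} There is no real obstacle: the only points requiring a moment of care are (i) reducing to a constant-sign subsequence of $\dot g(t_n)$ so that the integration in Step~3 does not suffer cancellation, and (ii) ensuring $\delta$ is chosen independently of $n$, which follows automatically from the \emph{uniform} bound on $\ddot g$. The argument is otherwise a direct, three-line application of the fundamental theorem of calculus.
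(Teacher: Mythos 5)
Your proof is correct and follows essentially the same route as the paper's: both derive the Lipschitz bound on $\dot g$ from the uniform bound on $\ddot g$ via the mean value theorem, deduce that $\dot g \geq \varepsilon/2$ on an interval of fixed length around each $t_n$, and conclude that $g$ has increments bounded below by a positive constant, contradicting the existence of a finite limit. The only cosmetic difference is that the paper works with one-sided intervals $[t_n, t_n + \varepsilon/(2M)]$ and phrases the contradiction as $g(t)$ diverging, whereas you use symmetric intervals and invoke the Cauchy criterion; your phrasing of the final step is, if anything, slightly cleaner.
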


\begin{proof}
Let's denote by $M\geq 1$  an upper bound on the second derivative of $g$.
Suppose that the statement was not true. In this case, we would be able to find a sequence $\{t_n\}$ going to infinity such that $\dot{g}(t_n)$ remains bounded away from zero. In particular, we can assume that $t_{n+1}>t_n+1$ and $\dot{g}(t_n)\geq \epsilon$
for some $0 < \epsilon < 1$ and all $n\in \mathbb{N}$. If we define $g_{2n}=g(t_n)$ and $g_{2n+1}=g(t_n+\frac{\epsilon}{2M})$, then  a first application of the mean value theorem implies that $\dot{g}(t)\geq \frac{\epsilon}{2}$ for $t_n\leq t \leq t_n + \epsilon/2M$. A second application implies that $g_{2n+1}-g_{2n}\geq \frac{\epsilon^2}{4M}>0.$ Hence,  $\lim_{n\rightarrow\infty}g_n$ if it exists, is infinity. Therefore, the same holds for  $\lim_{t\rightarrow \infty}g(t).$
\end{proof}

We are now ready to prove the following asymptotic property for the
orbits of the flow $\Phi$, associated with solution of the replicator equation~\eqref{eq:replicator2}.

\begin{thm}
\cite{piliouras2014optimization}
If $\Phi$ does not have an interior equilibrium, then
given any interior starting point $x\in \mathcal D$,
the orbit $\Phi(x,\cdot)$ converges to the boundary.  
Furthermore, if $(\textbf p, \textbf q)$ is an equilibrium of maximum support (defined as $\Delta_{\partial,1} \subset \partial \mathcal D$),
then the its limit set $\omega(x)$ satisfies  $\omega(x)\subset \text{int}(\Delta_{\partial,1})$. 
\end{thm}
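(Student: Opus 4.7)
The plan is to run a Lyapunov/LaSalle-type argument using the cross entropy against the maximum-support Nash equilibrium, exactly as sketched in the deterministic KL-divergence corollary. Let $(\mathbf{p},\mathbf{q})$ be the (non-interior) Nash equilibrium of maximum support and define
\[
 D(t) := D_{\mathrm{KL}}(\mathbf{p}\|\mathbf{x}(t)) + D_{\mathrm{KL}}(\mathbf{q}\|\mathbf{y}(t)) = V\bigl((\mathbf{p},\mathbf{q});(\mathbf{x}(t),\mathbf{y}(t))\bigr) + C,
\]
with $C$ a constant. By the second part of Theorem~\ref{thm:constant_of_motion} and Corollary~\ref{cor:KL}, $D$ is strictly decreasing along any interior trajectory and bounded below by $0$, so $D(t)\downarrow D_\infty \in [0,\infty)$.

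Next I would verify the hypotheses of Lemma~\ref{eq:derivative} applied to $g(t):=D(t)$. From the computation in the proof of Theorem~\ref{thm:constant_of_motion}, the factors $x_i$ and $y_j$ cancel the $1/x_i$ and $1/y_j$ arising from differentiating $\ln$, leaving
\[
 \dot D(t) = -\sum_i p_i\{\mathbf{Ay}(t)\}_i - \sum_j q_j\{\mathbf{Bx}(t)\}_j,
\]
which is smooth and uniformly bounded on all of $\overline{\mathcal D}$, and whose time derivative is again polynomial in $(\mathbf{x}(t),\mathbf{y}(t))$ with bounded coefficients because $\dot x_i,\dot y_j$ carry the prefactors $x_i,y_j$. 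Hence $\ddot D$ is uniformly bounded and Lemma~\ref{eq:derivative} yields $\dot D(t)\to 0$.

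For any $z\in\omega(x_0)$ the continuous function $\dot D$ must vanish. Rewriting the limiting identity $\dot D = 0$ as
\[
 \sum_i x_i\bigl(p^{\top}\mathbf{Aq}-\{\mathbf{Aq}\}_i\bigr) + \sum_j y_j\bigl(q^{\top}\mathbf{Bp}-\{\mathbf{Bp}\}_j\bigr) = 0,
\]
and using the quasi-strict property of the maximum-support equilibrium cited in \cite{GeorgiosSODA18} (so that $\{\mathbf{Aq}\}_i < p^{\top}\mathbf{Aq}$ strictly for $i\notin I$ and $\{\mathbf{Bp}\}_j < q^{\top}\mathbf{Bp}$ strictly for $j\notin J$), the non-negativity of each summand forces $x_i=0$ for all $i\in I^c$ and $y_j=0$ for all $j\in J^c$. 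Thus $\omega(x_0)\subset \overline{\Delta_{\partial,1}}$.

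Finally, to upgrade this to $\omega(x_0)\subset \interior(\Delta_{\partial,1})$ I would use the finiteness of $D_\infty$: if some $z\in\omega(x_0)$ had $x_i=0$ for an $i\in I$ (or $y_j=0$ for a $j\in J$), then along a subsequence $t_n$ with $(\mathbf{x}(t_n),\mathbf{y}(t_n))\to z$, the term $-p_i\ln x_i(t_n)\to +\infty$ in $V$, contradicting $D(t)\to D_\infty <\infty$. Hence every point of $\omega(x_0)$ has $x_i>0$ for all $i\in I$ and $y_j>0$ for all $j\in J$, which combined with the previous paragraph gives $\omega(x_0)\subset \interior(\Delta_{\partial,1})$. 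The only non-routine step is really the second one, i.e.\ justifying that $\ddot D$ stays uniformly bounded up to the boundary so that Lemma~\ref{eq:derivative} is applicable; once $\dot D\to 0$ is known, the remaining support/quasi-strictness argument is elementary.
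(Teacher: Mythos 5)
Your proposal is correct and follows essentially the same route as the paper's own proof in Appendix~\ref{app:determinstic_system}: monotone decrease and boundedness of the cross entropy, Lemma~\ref{eq:derivative} applied via the bounded second derivative to get $\dot D \to 0$, the quasi-strictness of the maximum-support equilibrium to pin the limit set's support inside $I \times J$, and finiteness of the limit of $D$ to exclude vanishing coordinates in $I \cup J$. You merely spell out the last two steps in more detail than the paper does; no substantive difference.
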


\begin{proof}
From the second part of Theorem~\ref{thm:constant_of_motion}, we have that starting from any fully mixed strategy profile $(\mathbf{x}(0),\mathbf{y}(0))$ and for all $t'\geq 0$ 
 $$\frac{\rmd V\big((\mathbf{p},\mathbf{q});(\mathbf{x}(t),\mathbf{y}(t)) \big)}{\rmd t} |_{t=t'}< 0.$$ 
The quantity $D_{\mathrm{KL}}(\mathbf{p} \| \mathbf{x}(t)) +  D_{\mathrm{KL}}(\mathbf{q} \| \mathbf{y}(t))$ is bounded from below by $0$ and strictly decreasing, and hence must exhibit a finite limit. Since
\begin{align*}
\frac{\rmd V \big((\mathbf{p},\mathbf{q});(\mathbf{x}(t),\mathbf{y}(t)) \big)}{\rmd t}= & -  \sum_{i} p_i \{\mathbf{Ay}(t)\}_i \\
&   -   \sum_{j} q_j \{\mathbf{Bx}(t)\}_j, 
\end{align*}
 it is immediate that  $D_{\mathrm{KL}}(\mathbf{p} \| \mathbf{x}(t)) +  D_{\mathrm{KL}}(\mathbf{q} \| \mathbf{y}(t))$ 
 has bounded second derivatives. Therefore,  Lemma \ref{eq:derivative} implies that 
 $$\lim_{t \rightarrow \infty}\frac{\rmd\big(D_{\mathrm{KL}}(\mathbf{p} \| \mathbf{x}(t)) +  D_{\mathrm{KL}}(\mathbf{q} \| \mathbf{y}(t))\big)}{\rmd t}= 0.$$
This implies by Corollary \ref{cor:KL} that  
for any $z \in \omega((\mathbf{x}(0),\mathbf{y}(0)))$ we have that the support of $z$ must be a subset of the support of $(\mathbf p, \mathbf q)$. The support of $(\mathbf p, \mathbf q)$ is not complete since $(\mathbf p, \mathbf q)$ is not a fully mixed equilibrium; hence,  $z$ must lie on the boundary as well.
Finally, since $D_{\mathrm{KL}}(\mathbf{p} \| \mathbf{x}(t)) +  D_{\mathrm{KL}}(\mathbf{q} \| \mathbf{y}(t))$ has a finite limit each $z\in\omega(\mathbf x(0), \mathbf y(0))$ must assign positive probability to all strategies in the support of $(\mathbf p, \mathbf q)$.  
\end{proof}


\section{Remarks on stochastic replicator dynamics and the generator of the Markov process}
\label{App:second_derivatives}

Due to the relation $X_1 + \dots + X_n = 1$, one might wonder if there is a dependency between the components, making $\partial_{x_j x_i} V(\mathbf{x,y})$ potentially non-zero for $j \neq i$ (and the same for $Y_1, \dots, Y_n$).
For example, when there are only two strategies $X_1$ and $X_2$ in equation~\eqref{eq:stoc2} (the same arguments also hold when considering $Y_1$, $Y_2$), one can either write the diffusion coefficients in the two components, specifically
\begin{align*}
&\sigma_1 x_1(1-x_1) \rmd W_1 + \sigma_2 x_1 x_2 \rmd W_2, \\
 &\sigma_2 x_2(1-x_2)\rmd W_2 + \sigma_1 x_1 x_2 \rmd W_1,
\end{align*}
as in equation~\eqref{eq:stoc2};
or one can reduce the problem to only one equation, say in $x_1$, with diffusion coefficient $\sqrt{\sigma_1^2 + \sigma_2^2}x_1(1-x_1)(\rmd W_1 + \rmd W_2)$ and accordingly rewritten deterministic drift, keeping track of $x_2 =1-x_1$ implicitly. Upon taking $\mathcal L (-\ln x_1 - \ln (1-x_1))$ in the latter case, one obtains exactly the same result as for the first case upon taking $\mathcal L (-\ln x_1 - \ln x_2)$, giving
\begin{align*}
&\mathcal{L} (-\ln x_1 - \ln (1-x_1)) = \mathcal L (-\ln x_1 - \ln x_2) \\
&= - [(Ay)_1 - x^\mathrm{T} Ay ] - [(Ay)_1 - x^\mathrm{T} Ay ] \\
&+ \frac{1}{2}\left( \sigma_1^2 + \sigma_2^2 \right) \left( (1-x)^2 + x_1^2 \right).
\end{align*} 
Indeed, the dependencies are already resolved by the way we write the equations;  considering the second derivatives in the other directions only matters when reducing the equations.

A more general way to see this is to transfer the approach in \cite{HofbauerImhof} to our situation, i.e.~consider 
$Z_i := X_i(Z_1 + \dots +Z_n)$ 
and $S:= Z_1 + \dots +Z_n$ in the corresponding equations and compute $\mathcal L (\ln x_i)= \mathcal L (\ln Z_i - \ln S)$, as $ Z_i $ and $S$ are not in danger to have hidden dependencies.
The corresponding equations for these variables in our case are (just compare terms with \cite{HofbauerImhof})
\begin{equation}\label{eq:Z_i}
\rmd Z_i = Z_i \left( (AY)_i + \sigma_i^2 X_i \right) \rmd t + \sigma_i Z_i \rmd W_i\,,
\end{equation}
and therefore 
\begin{equation}\label{eq:lnZ_i}
\mathcal L (-\ln Z_i) = - (AY)_i + \frac{1}{2}\sigma_i^2\left( 1 - 2 X_i \right)\,.
\end{equation}
Furthermore, we obtain
\begin{equation}\label{eq:S}
\rmd S = S \left( X^\mathrm{T}AY + \sum_{j} \sigma_j^2 X_j^2 \right) \rmd t + S \sum_j \sigma_j X_j \rmd W_j\,,
\end{equation}
and therefore
\begin{align}\label{eq:lnS}
\mathcal L (-\ln S)& = - X^\mathrm{T}AY - \sum_{j} \sigma_j^2 X_j^2 + \frac{1}{2}\sum_{j} \sigma_j^2 X_j^2 \nonumber \\
&= - X^\mathrm{T}AY - \frac{1}{2}\sum_{j} \sigma_j^2 X_j^2\,.
\end{align}
Hence, we eventually get
\begin{align}\label{eq:lnXi}
&\mathcal L (- \ln X_i ) = \mathcal{L}(-\ln Z_i) -
\mathcal L (-\ln S) \nonumber\\
&= - \left( (AY)_i - X^\mathrm{T}AY \right) + \frac{1}{2}\sigma_i^2\left( 1 - 2 X_i \right)+ \frac{1}{2}\sum_{j} \sigma_j^2 X_j^2 \nonumber \\
&= - \left( (AY)_i - X^\mathrm{T}AY \right) + \frac{1}{2}\sigma_i^2\left( 1 -  X_i \right)^2+ \frac{1}{2}\sum_{j\neq i} \sigma_j^2 X_j^2,
\end{align}
which then by summing up over all $X_i$ and also considering $Y_1, \dots, Y_n$ accordingly, leads to equation~\eqref{eq:H_bKolm}.



\section{Stochastic replicator dynamics and its connections to regret}

The (total) regret of an online learning algorithm compares its total accumulated payoff over some time horizon $[0,t]$ and compares it against the accumulated payoff of the best fixed strategy $i$ with hindsight. As long as this difference grows at most at a sublinear rate, such algorithms are known as \textit{no-regret}. It is easy to see that in the case of replicator dynamics the (total) regret remains bounded for any time interval $[0,t]$.
We can rewrite the replicator equations
\[
\dot{x}_i= x_i(u_i-\sum_j x_j u_j) 
\]
as
\[
\frac{\dot{x}_i}{x_i} = u_i-\sum_j x_j u_j,
\]
and obtain
\begin{align*}
\ln(x_i(t))-\ln(x_i(0)) 
&= \int^t_0 u_i(\tau)\rmd\tau-\int_0^t \sum_j x_j(\tau) u_j(\tau) \rmd\tau \\
&(=\text{Regret for not playing } i)
\end{align*} 
Hence, we have for all strategies $i$
\[-\ln(x_i(0)) 
\geq \int^t_0 u_i(\tau)\rmd\tau-\int_0^t \sum_j x_j(\tau) u_j(\tau) \rmd\tau,
\]
such that the regret for not having played strategy $i$ up to time $t$ is bounded by the logarithm of the initial weight chosen for that strategy.

A similar calculation can be done for the stochastic case, now using It\^{o}'s formula.
Consider now the stochastic replicator equations
\[
\rmd X_i = X_i \left(u_i - \sum_j X_j u_j\right) \rmd t + X_i (R(\mathbf{X})  \rmd W(t))_i.
\]
Then we obtain
\begin{align*}
&\ln(X_i(t))-\ln(X_i(0)) \\
&= \int^t_0 u_i(\tau)\rmd\tau-\int_0^t \sum_j X_j(\tau) u_j(\tau) \rmd\tau \\
&- \frac{1}{2} \int^t_0 \sum_j R_{i,j}^2(\mathbf{X}(\tau))\rmd\tau + \int^t_0 (R (\mathbf{X})  \rmd W(\tau))_i.
\end{align*}
Hence, taking expectations, we obtain
\begin{align*}
&-\ln(X_i(0)) + \frac{1}{2} \sum_j \int^t_0 \mathbb{E} \left[R_{i,j}^2(\mathbf{X}(\tau)) \right] \rmd \tau \\
&\geq \int^t_0 \mathbb{E}\left[u_i(\tau)\right]\rmd\tau-\int_0^t  \mathbb{E}\left[ \sum_j X_j(\tau) u_j(\tau) \right] \rmd\tau.
\end{align*}
For model~\eqref{eq:stoc2}, we find that the additional term in the estimate of expected regret is given by
\begin{align*}
&\frac{1}{2} \sum_j \int^t_0 \mathbb{E} \left[R_{i,j}^2(\mathbf{X}(\tau)) \right] \rmd \tau  \\
&= \frac{1}{2} \int^t_0 \sum_{j \neq i} \sigma_j^2 \mathbb{E} \left[X_j(\tau)^2 \right] + \sigma_i^2 \mathbb{E}\left[(1-X_i(\tau))^2\right] \rmd \tau \\
&\leq \frac{1}{2} \int^t_0 \sum_{j \neq i} \sigma_j^2 \mathbb{E} \left[X_j(\tau)^2 \right] \\
&+ \sigma_i^2 \left(\sum_{j \neq i} \mathbb{E}\left[X_j^2(\tau)\right] + \sum_{j,k \neq i, j\neq k} \mathbb{E}\left[X_j(\tau) X_k(\tau)\right] \right)  \rmd \tau\\
& \leq C \max_{k} \sigma_k^2 \, t,
\end{align*} 
where $2 > C > 0$ is a constant.

Given the particularly strong no-regret properties of replicator dynamics as well as other deterministic continuous-time Follow-the-Regularized Leader (FTRL) dynamics~\cite{sorin2009exponential,GeorgiosSODA18}, the above characterization is not particularly surprising. Nevertheless, using the standard approximate regret minimization techniques, it suffices to argue that the time-average of stochastic replicator dynamics converge to an approximate Nash equilibrium~\cite{Cesa06}. Naturally, this characterization constrains our invariant measures even further, however, exploring the full implications of this connection is beyond the scope of the current work.



\end{document}